\newtheorem{theorem}{Theorem}[section]
\newtheorem{lemma}[theorem]{Lemma}
\numberwithin{equation}{section}
\def\l{\left\langle}
\def\r{\right\rangle}
\def\eps{\varepsilon}
\def\p{\partial}
\def\div{{\rm div}}
\def\d{\mathop{}\!\mathrm{d}}
\def\no{\nonumber}
\def\T{\mathbb{T}}
\def\R{\mathbb{R}}
\def\u{u^\infty}
\def\v{v^\infty}
\def\pp{p^\infty}
\def\q{q^\infty}
\def\ou{\overline{u}}
\def\ov{\overline{v}}
\def\op{\overline{p}}
\def\oq{\overline{q}}
\newcommand{\abs}[1]{\left|#1\right|}
\newcommand{\skp}[2]{\left\langle #1,\, #2 \right\rangle}
\begin{document}

\title[Reversible Gray-Scott by EnVarA]
      {On a Reversible Gray-Scott Type System \\from Energetic Variational Approach \\and Its Irreversible Limit}
      
\author[J. Liang]{Jiangyan Liang}
\address[Jiangyan Liang]
  {\newline School of Mathematics and Statistics, Wuhan University, Wuhan, 430072, People’s Republic of China}
\email{ljymath@whu.edu.cn}

\author[N. Jiang]{Ning Jiang}
\address[Ning Jiang]
  {\newline School of Mathematics and Statistics, Wuhan University, Wuhan, 430072, People’s Republic of China}
\email{njiang@whu.edu.cn}

\author[C. Liu]{Chun Liu}
\address[Chun Liu]
  {\newline Department of Applied Mathematics, Illinois Institute of Technology, Chicago, IL 60616, USA}
\email{cliu124@iit.edu}

\author[Y. Wang]{Yiwei Wang}
\address[Yiwei Wang]
  {\newline Department of Applied Mathematics, Illinois Institute of Technology, Chicago, IL 60616, USA}
\email{ywang487@iit.edu}

\author[T.-F. Zhang]{Teng-Fei Zhang$^\dag$}
\address[Teng-Fei Zhang]{\newline School of Mathematics and Physics, China University of Geosciences, Wuhan, 430074, P. R. China}
\email{zhangtf@cug.edu.cn}

\thanks{$^\dag$Corresponding author.}

\begin{abstract}
  Most of the previous studies on the well-known Gray-Scott model view it as an irreversible chemical reaction system. In this paper, we derive a four-species reaction-diffusion system using the energetic variational approach based on the law of mass action. This is a reversible Gray-Scott type model, which has a natural entropy structure. We establish the local well-posedness of this system, and justify the limit to the corresponding irreversible Gray-Scott type system as some backward coefficients tend to zero. Furthermore, under some smallness assumption on the initial data, we obtain the global-in-time existence of classical solutions of the reversible system.

\vspace*{5pt}
\noindent\textit{Keywords}: Reversible Gray-Scott-like system; Existence; Convergence; Uniform energy estimates; Energetic variational approach

\noindent{\it 2020 Mathematics Subject Classification}: 35A01, 35B20, 35K57, 80A32

\end{abstract}

\maketitle

\section{Introduction}\label{sec:intro}

The research of reaction-diffusion systems has been an active field, especially since Turing developed morphogenesis and instability theory for reaction–diffusion systems in \cite{Tur52,Ni-11b}. The dynamic instability of the reaction and transport processes can lead to pattern formation and periodic oscillations. The pattern dynamics has been extensively studied from the viewpoints of mathematics and experiments \cite{Pea93,D91,NU-99physD,NU-01physD}. These geometrical structures consist of stripes and/or dots, often known as dissipative structures. Moreover, Prigogine et al. proposed the concept of the dissipative structures in the framework of the non‐equilibrium thermodynamics \cite{GP77,KP14}.

In the present paper, we study the following reversible reaction-diffusion system involving four chemical species:
  \begin{align}\label{sys:Re-Gray-Scott} \tag{Re-GS}
  \begin{cases}
    u_{t} = d_u \Delta u - k_1^+ u v^2 + k_1^- v^3 - k_0^+ u + k_0^- q, \\
    v_{t} = d_v \Delta v + k_1^+ u v^2 - k_1^- v^3 - k_2^+ v + k_2^- p,\\
    p_{t} = d_p \Delta p + k_2^+ v - k_2^- p, \\
    q_{t} = d_q \Delta q + k_0^+ u - k_0^- q\,.
  \end{cases}
  \end{align}
The unknowns $(u\,,v\,,p\,,q)$ are molecular concentrations of reactants (or products) $U, V, P$ and $Q$, depending on the time $t \ge 0$ and the spatial position $x \in \Omega \subset \mathbb{R}^3$. The positive constants $d_\alpha$'s with $\alpha = u,v,p,q$ are diffusion coefficients of the corresponding species. Mathematically, the system \eqref{sys:Re-Gray-Scott} is a nonlinear parabolic system,  which describes the evolution of the molecular concentrations with the following chemical reaction schemes:
  \begin{align}\label{eq:cubic-cata-re}
    U + 2V \xrightleftharpoons[k_1^-]{k_1^+} 3V, \quad
    V \xrightleftharpoons[k_2^-]{k_2^+} P, \quad  U \xrightleftharpoons[k_0^-]{k_0^+} Q,
  \end{align}
where $k^\pm_i$ with $i=0, 1,2$ are the forward and backward rate coefficients of the $i\mbox{-}$th reaction. It is assumed that the chemical rates obey the so-called \emph{law of mass action} (LMA) in the chemical kinetics theory, which indicates that, the rate of a reaction process is proportional to the concentrations of the reactants. Therefore, denoting by $r_i$ ($i= 0, 1,2$) the total rates for the above two reactions, it follows,
  \begin{align}
    r_1 = k_1^+ u v^2 - k_1^- v^3, \quad r_2 = k_2^+ v - k_2^- p, \quad r_0 = k_0^+ u - k_0^- q\,.
  \end{align}

The system \eqref{sys:Re-Gray-Scott} can be viewed as a generalization of the classical, irreversible Gray-Scott model \cite{GS83, GS84}, which arises originally from the study of cubic auto-catalytic reactions in a continuously flowing, well-stirred gel reactor. Gray-Scott model is governed by the following irreversible system:
  \begin{align}\label{eq:ori-GS}
  \begin{cases}
    u_{t} = d_u \Delta u - k_1^+ u v^2 + \alpha(1 - u),  \\
    v_{t} = d_v \Delta v + k_1^+ u v^2 - k_2^+ v.
  \end{cases}
  \end{align}
This system is centered on a cubic auto-catalysis reaction, with the catalyst species $V$ decays slowly to an inert product $P$.  The Gray-Scott system \eqref{eq:ori-GS} corresponds to the chemical reaction schemes:
  \begin{align}\label{GS-model}
    U +2V \xlongrightarrow{k_1^+} 3V, \quad V \xlongrightarrow{k_2^+} P, \quad U \xlongrightarrow{\alpha} Q\,.
  \end{align}
The coefficient $\alpha$ is also the rate of the process that feeds $U$. 
Comparing to the chemical reactions \eqref{eq:cubic-cata-re}, the reactions in classical Gray-Scott model \eqref{GS-model} is irreversible.

The four species reversible Gray-Scott type reaction-diffusion system \eqref{sys:Re-Gray-Scott} bears some apparent similarities to the original, irreversible Gray-Scott system. We expect fruitful analytical properties of the solutions to \eqref{sys:Re-Gray-Scott}. The first step of the analytical study is to establish the well-posedness of the system  \eqref{sys:Re-Gray-Scott} in different domains (bounded domain with proper boundary conditions, periodic domain, or whole space) and in different framework of solutions (weak or classical solutions). Furthermore, it is interesting to justify the relation between the reversible Gray-Scott system \eqref{sys:Re-Gray-Scott} and the irreversible Gray-Scott system \label{eq:ori-GS}.

A motivation of studying the reversible Gray-Scott system \eqref{sys:Re-Gray-Scott} instead of the original Gray-Scott system \eqref{eq:ori-GS} is that the reversible system possess an energy (entropy) structure. Indeed, the four-species reversible Gray-Scott-like system \eqref{sys:Re-Gray-Scott} can be derived from the following entropy-entropy production law:
  \begin{align}\label{energy-dissipation law}
    & \frac{\d}{\d t} \int_{\Omega}
            \left[
              u (\ln \tfrac{u}{\bar u} -1 ) + v (\ln \tfrac{v}{\bar v} -1 )
            + p (\ln \tfrac{p}{\bar p} -1 ) + q (\ln \tfrac{q}{\bar q} -1 )
            \right]  \d x \no\\
    & + \int_{\Omega} \left(\tfrac{d_u}{u} \abs{\nabla u}^2 + \tfrac{d_v}{v} \abs{\nabla v}^2 + \tfrac{d_p}{p} \abs{\nabla p}^2 + \tfrac{d_q}{q} \abs{\nabla q}^2 \right) \d x  \\
    & + \int_{\Omega} \left[(k_0^+ u - k_0^- q) \ln \tfrac{k_0^+ u}{k_0^- q}
     +  (k_1^+ u v^2 - k_1^- v^3) \ln \tfrac{k_1^+ u}{k_1^- v} +  (k_2^+ v - k_2^- p) \ln \tfrac{k_2^+ v}{k_2^- p}\right] \d x \no\\
    & =0,\no
  \end{align}
where $(\overline{u}, \overline{v}, \overline{p}, \overline{q})$ is the positive constant solution (equilibrium) of the system \eqref{sys:Re-Gray-Scott} satisfying the \emph{detailed balance condition}, see \eqref{eq:equilibrium} below. The relation \eqref{energy-dissipation law} will be also referred as the basic energy-dissipation law in following contexts. The derivation process will be given in section 2, by using the energetic variational approach (EnVarA). We emphasize here that besides the reversible Gray-Scott system \eqref{sys:Re-Gray-Scott} is derived from the point view of energetic variation, so should be its boundary condition in the bounded domain (see \cite{LW-19arma}). Analytically, the entropy-entropy production law \eqref{energy-dissipation law} leads to a natural a priori estimate of the system \eqref{sys:Re-Gray-Scott}. However, the estimate \eqref{energy-dissipation law} is not enough to obtain the compactness of the weak solutions, which is unlike the incompressible Navier-Stokes equations. For simplicity, in this paper, we consider the classical solutions of \eqref{sys:Re-Gray-Scott} in torus or whole space. We leave the harder global weak solutions and the boundary condition issues to the future study.

Furthermore, we also will establish a reversible-irreversible limit from the four species reversible Gray-Scott-like system to an irreversible Gray-Scott-like system as some of backward rate coefficients $k_1^- $ and $k_2^-$ go to zero simultaneously. Again, we work in the classical solutions in the torus or whole space.

\subsection{Gray-Scott model: review of mathematical studies} 
\label{sub:reviews}

The Gray-Scott model is of great importance since it describes several experimentally observable autocatalytic reactions such as chloride-iodide-malonic acid reaction, arsenite-iodate reaction, and some enzyme reactions in biochemistry and biology. In particular, its complex pattern formation behavior associated to various range of parameters attracts many attentions of researchers from different disciplines. The self-replicating pulses and spots was firstly studied numerically by Pearson in \cite{Pea93}, see also \cite{LMPS94,RPP-94prl}. Doelman-Kaper-Zegeling \cite{DKZ-97nonli} constructed rigorously single and multiple pulse solutions, and later the stability was studied in \cite{DGK-98physD}. Hale-Peletier-Troy \cite{HPT-99aml,HPT-00siap} studied the existence and stability issues in the equal diffusivities case of $d_u=d_v$. Concerning the weak interaction regime in which both of $d_u, d_v \ll 1$ are in the same order, a skeleton structure of self-replicating dynamic and spatio-temporal chaos are analyzed in Nishiura-Ueyama \cite{NU-99physD,NU-01physD}. On the other hand, there are many researches on other types of patterns such as spike, stripe, ring and so on. For instance, existence, stability and pulse-splitting behavior are studied in Wei \cite{Wei-99nonl,Wei-01physD} and his collaborated works \cite{WW-03physD,KW-05ejam}, based on the so-called semi-strong interaction regime where small diffusivity ratio $d_v \ll d_u = \mathcal{O}(1)$ is assumed. Besides, some general models are introduced and studied in \cite{LW-03physD,HLW-05ima} in dimensions one and two, concerning a general auto-catalytic scheme $U + mV \to (m+1) V,\ V \to P$ with reaction rates $uv^m$ and $v^n$ (assuming $m>n>1$). Later the travelling wave solutions are studied for the general order model in the case of that without feeding, see \cite{CQ-09jde,CQZ-16jde,ZCQZ-18jdde}. More researches on other related models and topics can be found, for instance, in \cite{Z21,GW81,KO94,VSK94,WW04,GMW20,LA17,G12} and references therein.



Chemical reactions in reality are reversible processes \cite{GQ-13pre}. However, there are only a few researches in this direction, to the best of our knowledge. As a modification of the original irreversible Gray-Scott model, a reversible Gray-Scott model involving three reactants $(U,\,V,\,P)$ was introduced in \cite{MSY04}, based on the reversible reaction scheme \eqref{eq:cubic-cata-re} in which the product $P$ is not an inert substance any more. The existence and robustness of global attractor is studied later in \cite{You-10dcds,You-12jdde}, and the global attractor of a lattice reversible model is studied in \cite{JZY-12amc}.

The present paper focuses on the reversible chemical reactions, which are assumed to obey the \emph{law of mass action}. Usually speaking, a chemical reaction should not be viewed as a Newtonian mechanics \cite{GQ-16pre}. Wang-Liu-Liu-Eisenberg \cite{WLLE-20pre} showed recently a possible variational treatment on the reaction-diffusion process obeying LMA and the detailed balance condition. Their formulation provides a basis of coupling chemical reactions with other mechanical effects. The core of their treatment is based on a generalized notion of \emph{energetic variation approach} (EnVarA). The EnVarA is developed from seminal work of Rayleigh \cite{Ray1871} and Onsager \cite{Ons1931-1,Ons1931-2}, and has proven to be a powerful tool to deal with the couplings and competitions between different mechanisms in different scales. This approach has been successfully applied to model many systems, especially those in complex fluids, such as liquid crystals, polymeric fluids, phase field and ion channels, see the survey \cite{GKL-18notes} for more details. The EnVarA can also be used to study problems with boundary, especially for dynamical boundary conditions problems for the Cahn-Hilliard equation \cite{LW-19arma,KLLM-20m2an} where chemical reactions occurring at the boundary are taken into account. Recently, a micro-macro model for living polymeric fluids involving the reversible chemical reaction of breakage and reforming process is derived by EnVarA in Liu-Wang-Zhang \cite{LWZ-20axv,LWZ-21nnfm}, where the global existence near equilibrium is established.

The main reason that we consider the reversible reactions is due to the \emph{entropy-entropy production} structure exhibited in this case. Based on this entropy structure \eqref{energy-dissipation law} combined with the corresponding kinematic relations, we can derive by a general EnVarA the reversible Gray-Scott-like system \eqref{sys:Re-Gray-Scott}. The detailed derivation can be found in \S \ref{sec:derivation_ReGS} below.

Notice that more phenomena will arise when we consider different timescales for different reaction schemes in the \eqref{sys:Re-Gray-Scott} system.
The limit issues of some diffusion-reaction system with small parameter are proved by Evans \cite{evans80} and Gajewski-Sparing \cite{Gaje84}. Chen-Gao \cite{Chen00} considered the well-posedness of a free boundary problem arising from the limit of a FitzHugh-Nagumo system (a slow-diffusion fast-reaction system). Bisi-Conforto-Desvillettes \cite{BCD-07bull} justified rigorously the quasi-steady-state approximation used in chemistry. Mielke-Peletier-Stephan \cite{MPS-21nonli} considered the nonlinear systems satisfying LMA with slow and fast reactions.

In this paper, we will study some certain scaling limit on parameters, speaking specifically, the reversible-irreversible limit as the backward rate coefficients $k_1^- $ and $k_2^-$ go to zero simultaneously. By employing in the \eqref{sys:Re-Gray-Scott} system the parameters $k_1^- = k_2^- = \eps$, we get the following approximate system:
  \begin{align}\label{sys:Re-Gray-Scott-eps} \tag*{(Re-GS)$_\eps$}
  \begin{cases}
    u^\eps_t = d_u \Delta u^\eps - k_1^+ u^\eps (v^\eps)^2 + \eps (v^\eps)^3 - k_0^+ u^\eps + k_0^- q^\eps, \\
    v^\eps_t = d_v \Delta v^\eps + k_1^+ u^\eps (v^\eps)^2 - \eps (v^\eps)^3 - k_2^+ v^\eps + \eps p^\eps,\\
    p^\eps_t = d_p \Delta p^\eps + k_2^+ v^\eps - \eps p^\eps, \\
    q^\eps_t = d_q \Delta q^\eps + k_0^+ u^\eps - k_0^- q^\eps.
  \end{cases}
  \end{align}
As the small parameter $\eps$ goes to zero, we get the limit system, at least formally, that
  \begin{align}\label{sys:Ir-Gray-Scott}\tag{Ir-GS}
  \begin{cases}
    \u_t = d_u \Delta \u - k_1^+ \u (\v)^2 - k_0^+ \u + k_0^- \q, \\
    \v_t = d_v \Delta \v + k_1^+ \u (\v)^2 - k_2^+ \v, \\
    \pp_t = d_p \Delta \pp + k_2^+ \v, \\
    \q_t = d_q \Delta \q + k_0^+ \u - k_0^- \q,
  \end{cases}
  \end{align}
which will be referred as the irreversible Gray-Scott system in the following context.

It is worth mentioning to obtain the original Gray-Scott model \eqref{eq:ori-GS}, we need to take another limiting from the irreversible Gray-Scott system \eqref{sys:Ir-Gray-Scott}, which is more challenging.
Formally, one can view the reaction
$$\textstyle U \xrightleftharpoons[k_0^-]{k_0^+} Q$$
as a birth-death reaction, which describes the exchange of the system with the environment. We assume the concentration relation in \eqref{sys:Ir-Gray-Scott}: $\q \gg \u$, and define a new unknown $s^\infty = \lambda \q$ with respect to a sufficiently small parameter $\lambda$. For the sake of exposition, we omit temporarily the molecular diffusion of $\pp$ and $\q$, i.e., $d_p = d_q = 0$. Therefore, by assuming $k_0^- = \lambda$, the third and fourth equations of \eqref{sys:Ir-Gray-Scott} can be reduced to:
  \begin{align}
  \begin{cases}
    \pp_t = k_2^+ \v, \\
    s^\infty_t = \lambda (k_0^+ \u - s^\infty).
  \end{cases}
  \end{align}
Notice that and $s^\infty$ can be easily represented as:
  \begin{align}
    s^\infty(t) = s^\infty_0 e^{-\lambda t} + \lambda \int_0^t k_0^+ \u(\tau) e^{-\lambda (t-\tau)} \d \tau,
  \end{align}
where $s^\infty_0$ denotes the initial data of $s^\infty$. This means that when the small parameter $\lambda$ goes to zero, it holds, formally, $s^\infty \to s^\infty_0$. By inserting this relation into the first equation of \eqref{sys:Ir-Gray-Scott}, and noticing the unknown $\pp$ is decoupled from the other equations, the irreversible system \eqref{sys:Ir-Gray-Scott} is finally reduced to the following formulation, at $O({1}/{\lambda}$) time scale:
  \begin{align}\label{sys:Ir-Gray-Scott-sf}
  \begin{cases}
    \u_t = d_u \Delta \u - k_1^+ \u (\v)^2 + k_0^+ \left( \frac{s^\infty_0}{k_0^+} - \u \right) , \\[7pt]
    \v_t = d_v \Delta \v + k_1^+ \u (\v)^2 - k_2^+ \v,
  \end{cases}
  \end{align}
with a decoupled equation for product $\pp$. So the quantity $\tfrac{s^\infty_0}{k_0^+}$ possesses a consistency with the feeding term from the external fields in the classical Gray-Scott system \eqref{eq:ori-GS}, by fixing its value $\tfrac{s^\infty_0}{k_0^+} =1$. This slow-fast dynamics presentation may provide the asymptotic relation between our irreversible system \eqref{sys:Ir-Gray-Scott} and the classical one \eqref{eq:ori-GS}. In the process, the birth-death scheme plays a crucial role. The interested readers can find similar ideas in \cite{FRE-18prl}. The rigorous justification of this limit is under preparation. We emphasize that the limiting procedures $\eps \rightarrow 0$ and $\lambda \rightarrow 0$ are not commutative, and the scale of $\lambda$ and $\eps$ may also be different.

Our goals in this paper are to establish the well-posedness of the reversible Gray-Scott-like system \eqref{sys:Re-Gray-Scott}, and to study above reversible-irreversible limit between the approximate reversible system \ref{sys:Re-Gray-Scott-eps} and the corresponding irreversible limit system \eqref{sys:Ir-Gray-Scott}.


\subsection{Main results}

Before presenting our main results, we first gather all notations and conventions used throughout this paper. We use $C$ to denote some positive constant that may take different values at different lines. 
For any $p \in [1,\infty)$, we introduce the Banach spaces $L^p$ equipped with the norms
    $| f |_{L^p} = (\int_\Omega |f|^p \d x )^{\frac{1}{p}},$
where $\Omega = \R^3 \,\text{or} \,\T^3$. Especially for $p = 2$, we use the notation $\l \cdot,\cdot \r$ to represent the inner product on the Hilbert space $L^2$.
The symbol $\nabla_x$ stands for the gradient operator and $\Delta_x$ denotes the Laplacian operator. For any multi-index $k=(k_1, k_2, k_3) \in \mathbb{N}^3$, we denote the higher order derivative operators
    $\p_x^k = \frac{ \p^{ |k| } }{ \p x_1^{k_1} \p x_2^{k_2} \p x_3^{k_3} },$
where $|k|=k_1+k_2+k_3$. We then define the Sobolev spaces $H^s$ endowed with the norms
  \begin{align*}
     | \, \cdot \, |_{H^s} = ( \sum_{|k| = 0}^s | \p^k_x \, \cdot \, |_{L^2}^2 )^{\frac{1}{2}}.
  \end{align*}

In this paper, we mainly investigate the well-posedness of the reversible system \eqref{sys:Re-Gray-Scott} with initial data \eqref{eq:initi-ReGS}, including the local existence with large initial data and the local convergence of the asymptotic system \ref{sys:Re-Gray-Scott-eps}. Moreover, under the smallness assumption on initial data, the reversible system \eqref{sys:Re-Gray-Scott} will admit a global-in-time solution near the equilibrium.

Our main results are expressed respectively in Theorem \ref{thm:local-existence-convergence} and Theorem \ref{thm:global-exist-2} below.

\begin{theorem}[Local well-posedness and convergence towards the irreversible system]
\label{thm:local-existence-convergence}
~

Let the domain $\Omega$ be $\T^3 \,\text{or}\,\, \R^3$. Assuming the initial data 
$(u_0,v_0,p_0,q_0) \in H^1(\Omega)$, there exists some positive constant $T>0$, depending only on the initial data, such that the Cauchy problem of the reversible Gray-Scott-like system \eqref{sys:Re-Gray-Scott} admits a unique solution $(u,v,p,q) \in L^\infty(0, T; H^1 (\Omega)) \cap L^2 (0, T; H^2 (\Omega))$, which satisfies the following energy bound:
  \begin{align}\label{eq:energy-bound-local}
     \sup_{ t \in [0,T] } \abs{(u,v,p,q)}_{H^1}^2
      + \int_0^T \abs{\nabla (u,v,p,q) }^2_{H^1} \d t
    \le C,
  \end{align}
where the bound $C$ only depends on the initial data, the maximum time $T$, and the coefficients in the system.

Furthermore, let $(u^\eps, v^\eps, p^\eps, q^\eps) \in L^\infty (0,T; H^1(\T^3)) \cap L^2 (0,T; H^2(\T^3))$ be the classical solution of the asymptotic reversible system \ref{sys:Re-Gray-Scott-eps} with initial data $(u_0,v_0,p_0,q_0)$ $\in H^1(\T^3)$. 
Then we have, as $\eps \rightarrow 0$,  
  \begin{align}
    (u^\eps, v^\eps, p^\eps, q^\eps) \longrightarrow (\u, \v, \pp, \q) \quad \text{in } C (0,T; L^2(\T^3)) \cap L^2 (0,T; H^1(\T^3)),
  \end{align}
 where $(\u, \v, \pp, \q)$ is the solution of the irreversible system \eqref{sys:Ir-Gray-Scott} with the same initial data $(u_0,v_0,p_0,q_0)$.

\end{theorem}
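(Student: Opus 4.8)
The plan is to regard \eqref{sys:Re-Gray-Scott} as a semilinear parabolic system with smooth (cubic) nonlinearities and to prove both assertions by the energy method, using the bound \eqref{eq:energy-bound-local} as the $\eps$-uniform quantity that drives the convergence. No sign or positivity information is needed, since the nonlinearities $u v^2$ and $v^3$ are polynomial and hence defined for all real arguments.

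For the local existence I would first construct approximate solutions, e.g.\ by a Galerkin truncation or by a linearized iteration in which the cubic terms are frozen at the previous step, and for each approximation produce a closed a priori estimate in $H^1$. Testing the $\alpha$-th equation against its unknown and against $-\Delta$ of that unknown (equivalently, pairing $\p_x^k(\cdot)$ for $|k|\le 1$ in $L^2$), then integrating by parts, isolates the parabolic dissipation $d_\alpha\abs{\nabla\cdot}_{L^2}^2$ and $d_\alpha\abs{\Delta\cdot}_{L^2}^2$ on the left. The only delicate contributions are the cubic ones: in three dimensions $H^1\hookrightarrow L^6$ gives $\abs{u v^2}_{L^2}\le\abs{u}_{L^6}\abs{v}_{L^6}^2\ls\abs{u}_{H^1}\abs{v}_{H^1}^2$ and similarly for $v^3$, so each cubic forcing lies in $L^2$ and, paired against $\Delta$, can be absorbed into the dissipation by Young's inequality at the price of a term of order $\abs{(u,v,p,q)}_{H^1}^6$. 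This yields a Riccati-type inequality $\tfrac{\d}{\d t}\abs{(u,v,p,q)}_{H^1}^2 + c\,\abs{\nabla(u,v,p,q)}_{H^1}^2 \ls P\big(\abs{(u,v,p,q)}_{H^1}^2\big)$ with $P$ polynomial, integrable up to a time $T$ depending only on the size of the data; this is exactly \eqref{eq:energy-bound-local}. Compactness (Aubin--Lions) passes to the limit in the approximation, and since the principal part is linear the limit is a genuine solution in $L^\infty(0,T;H^1)\cap L^2(0,T;H^2)$. Uniqueness follows at the $L^2$ level: for two solutions with the same data the nonlinear differences factor as (difference)$\times$(bounded combination of the two solutions), which by $H^1\hookrightarrow L^6$ is controlled by the $L^2$-norm of the difference times the finite $H^1$-norms, so Gr\"onwall forces the difference to vanish.

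For the convergence I would first note that \eqref{eq:energy-bound-local} is uniform in $\eps\in(0,1]$, since for the solutions $(u^\eps,v^\eps,p^\eps,q^\eps)$ of \ref{sys:Re-Gray-Scott-eps} the extra terms carry the prefactor $\eps$ and share the same cubic/linear structure; hence $(u^\eps,v^\eps,p^\eps,q^\eps)$ is bounded in $L^\infty(0,T;H^1)\cap L^2(0,T;H^2)$ independently of $\eps$ on a common interval $[0,T]$, and \eqref{sys:Ir-Gray-Scott} has a solution $(\u,\v,\pp,\q)$ of the same regularity by the argument above. I would then write the difference equations for $(u^\eps-\u,\,v^\eps-\v,\,p^\eps-\pp,\,q^\eps-\q)$. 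As in the uniqueness step, the nonlinear differences are Lipschitz in the $L^2$-norm against the uniform $H^1$-bounds, while the genuinely $\eps$-dependent sources are $\eps(v^\eps)^3$ and $\eps p^\eps$, which satisfy $\abs{\eps(v^\eps)^3}_{L^2}\le\eps\abs{v^\eps}_{L^6}^3\ls\eps$ and $\abs{\eps p^\eps}_{L^2}\ls\eps$. An $L^2$ energy estimate on the difference, with these $O(\eps)$ sources and the parabolic dissipation absorbing the first-order terms, closes by Gr\"onwall to give $\sup_{[0,T]}\abs{(u^\eps-\u,\dots)}_{L^2}^2 + \int_0^T\abs{\nabla(u^\eps-\u,\dots)}_{L^2}^2\,\d t \ls \eps^2 e^{CT}$, which is precisely convergence in $C(0,T;L^2)\cap L^2(0,T;H^1)$.

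The main obstacle is the borderline scaling of the cubic nonlinearity in dimension three: three $L^6$ factors land in $L^2$ and no better, so the a priori estimate closes only locally and only for $H^1$ (not $L^2$) data, and this same criticality is why the convergence is obtained in the weaker topology $C(0,T;L^2)\cap L^2(0,T;H^1)$ rather than in $H^1$ --- pushing the difference estimate to $H^1$ would require uniform control of $\nabla$ of the cubic differences, which is not available. Once this scaling is respected, the remaining steps (absorption into the dissipation, Gr\"onwall) are routine.
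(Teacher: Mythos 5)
Your proposal is correct, and the local-existence half follows essentially the same route as the paper: an iteration/Galerkin construction plus an $H^1$ energy estimate in which the cubic terms are handled via $H^1\hookrightarrow L^6$, giving a Riccati-type inequality (the paper's Lemma \ref{lemma local-Re-GS-1}, $\tfrac{\d}{\d t}E_L + D_L \le C_L(E_L + E_L^3)$, solved explicitly via the function $H(y)=y/(1+y^2)^{1/2}$), and then compactness to pass to the limit in the approximation. Where you genuinely diverge is the reversible--irreversible limit. The paper proves convergence by compactness: it first shows the uniform-in-$\eps$ bound \eqref{uniform-bound-1}, then separately bounds the time derivatives $(u^\eps_t,\dots)$ in $L^2(0,T;L^2)$, and invokes the Aubin--Lions--Simon theorem with the compact embeddings $H^2(\T^3)\hookrightarrow H^1(\T^3)$ and $H^1(\T^3)\hookrightarrow L^2(\T^3)$ to extract strong limits, finally verifying that $\eps(v^\eps)^3\to 0$, $\eps p^\eps\to 0$ and $u^\eps(v^\eps)^2\to \u(\v)^2$ so that the limit solves \eqref{sys:Ir-Gray-Scott}; this is why the paper's convergence statement is confined to the torus, where those embeddings are compact. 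You instead first solve \eqref{sys:Ir-Gray-Scott} by the same local theory and then run a direct $L^2$ stability estimate on the differences, with the $O(\eps)$ sources $\eps(v^\eps)^3$, $\eps p^\eps$ and Gr\"onwall. Your route buys three things the paper's does not: a quantitative rate $O(\eps^2 e^{CT})$ rather than a qualitative limit, convergence of the whole family without subsequence extraction, and independence from compact embeddings, so the argument would work verbatim on $\R^3$ as well as $\T^3$ (one must only check, as you implicitly do, that the quadratic-in-difference terms such as $\int (u^\eps-\u)^2 (v^\eps)^2\,\d x$ close via $L^3$--$L^6$ interpolation with part absorbed into the dissipation). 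Conversely, the paper's compactness method is the more robust template when uniqueness or stability of the limit system is unavailable, since it never needs a Lipschitz-type difference estimate.
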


We now state the global existence result of the reversible system \eqref{sys:Re-Gray-Scott} near the equilibrium state. For that, we firstly introduce the notion of equilibrium. We impose initial data on system \eqref{sys:Re-Gray-Scott},
  \begin{align}\label{eq:initi-ReGS}
    u(0,x) = u_0(x),\quad v(0,x) = v_0(x),\quad p(0,x) = p_0(x),\quad q(0,x) = q_0(x)\,,
  \end{align}
and when the domain $\Omega$ is finite, i.e. $|\Omega| < \infty$,
  \begin{align}\label{eq:initi-total}
    \int_\Omega(u_0 + v_0 + p_0 + q_0)\,\mathrm{d}x = Z_0>0\,.
  \end{align}
Noticing that the system \eqref{sys:Re-Gray-Scott} satisfies the constraint of conservation of atoms, i.e., $\tfrac{\d}{\d t} \int (u+v+p+q) \d x =0$, we get formally that $(u+v+p+q)(t) =Z_0$ for any $t>0$.

Now we determine the constant solutions $(\ou, \ov, \op, \oq)$ to the reversible system \eqref{sys:Re-Gray-Scott}. Putting $(\ou, \ov, \op, \oq)$ into \eqref{sys:Re-Gray-Scott}, we have:
\begin{equation}\label{equilibrium-1}
  k^+_0\ou= k^-_0 \oq\,,\quad k^+_1\ou \ov^2= k^-_1\ov^3\,,\quad k^+_2\ov= k^-_2 \op\,,
\end{equation}
with the global conservation of mass:
\begin{equation}\label{equilibrium-2}
  \ou+ \ov+\op+ \oq = \frac{Z_0}{|\Omega|}\,.
\end{equation}
Without loss of generality, we assume $|\Omega|=1$. From \eqref{equilibrium-1} and \eqref{equilibrium-2}, we obtain two types of constant solutions to the reversible system \eqref{sys:Re-Gray-Scott}: one is $(\ou, \ov, \op, \oq)$,
  \begin{align}\label{eq:equilibrium}
  \begin{cases}
    \ou = \tfrac{k_0^- k_1^- k_2^-}{K} Z_0, \quad \ov = \tfrac{k_0^- k_1^+ k_2^-}{K} Z_0, \\[5pt]
    \op = \tfrac{k_0^- k_1^+ k_2^+}{K} Z_0, \quad \oq = \tfrac{k_0^+ k_1^- k_2^-}{K}Z_0,
  \end{cases}
  \end{align}
and $K={k_0^-k_1^-k_2^- + k_0^-k_1^+k_2^- + k_0^-k_1^+ k_2^+ + k_0^+ k_1^- k_2^-}$. The other constant solution is $(\underline{u}, 0, 0, \underline{q})$ with
  \begin{align}\label{eq:equilirium-2}
    \underline{u} = \tfrac{k_0^-}{k_0^+ + k_0^-} Z_0,\quad \underline{q} = \tfrac{k_0^+}{k_0^+ + k_0^-} Z_0.
  \end{align}

We point out that, when the domain $\Omega=\mathbb{R}^3$, the constant state $(\ou, \ov, \op, \oq)$ only satisfies the system \eqref{sys:Re-Gray-Scott}, but with infinite energy since the integral is infinite. In this case, we still use the equilibrium \eqref{eq:equilibrium} and \eqref{eq:equilirium-2} which can be seen as the limiting case of the finite domain.

Now we introduce the global existence of system \eqref{sys:Re-Gray-Scott} near the equilibrium $(\ou,\ov,\op,\oq)$. Based on the local result in Theorem \ref{thm:local-existence-convergence}, under the further assumption on the smallness of initial data, the equilibrium solution $(\ou,\ov,\op,\oq)$ can be extended globally in time. The solution $(u, v, p, q)$ can be rewritten as the following perturbation form:
  \begin{align}\label{eq:perturb-ansatz-2}
    u = \ou + \widetilde{u}, \quad
    v = \ov + \widetilde{v}, \quad
    p = \op + \widetilde{p}, \quad
    q = \oq + \widetilde{q},
  \end{align}
where the perturbation $\abs{\phi}\ll 1(\phi= \widetilde{u}, \widetilde{v}, \widetilde{p}, \widetilde{q})$. Correspondingly, the perturbative system is of the following formulation:
  \begin{align}\label{sys:perturb-2}
  \begin{cases}
    \widetilde{u}_t = d_u \Delta \widetilde{u} - k_1^+ \widetilde{u} \widetilde{v}^2 + k_1^- \widetilde{v}^3 - 2k_1^+ \ov \widetilde{u}\widetilde{v} + 2k_1^- \ov \widetilde{v}^2 - k_1^+ \ov^2 \widetilde{u} + k_1^- \ov^2 \widetilde{v} - k_0^+ \widetilde{u} + k_0^- \widetilde{q}, \\
    \widetilde{v}_t = d_v \Delta \widetilde{v} + k_1^+ \widetilde{u} \widetilde{v}^2 - k_1^- \widetilde{v}^3 + 2k_1^+ \ov \widetilde{u}\widetilde{v} - 2k_1^- \ov \widetilde{v}^2 + k_1^+ \ov^2 \widetilde{u} - k_1^- \ov^2 \widetilde{v} - k_2^+ \widetilde{v} + k_2^- \widetilde{p}, \\
    \widetilde{p}_t = d_p \Delta \widetilde{p} + k_2^+ \widetilde{v} - k_2^- \widetilde{p}, \\
    \widetilde{q}_t = d_q \Delta \widetilde{q} + k_0^+ \widetilde{u} - k_0^- \widetilde{q},
  \end{cases}
  \end{align}
where the relation $k_1^+ \ou = k_1^- \ov$, $k_0^+ \ou = k_0^- \oq$,  $k_2^+ \ov = k_2^- \op$ has been used.
The corresponding initial data is
  \begin{align}\label{IC:perturb-2}
    \widetilde{u}_0 = u_0 - \ou,\quad \widetilde{v}_0 = v_0 - \ov,\quad \widetilde{p}_0 = p_0 - \op, \quad \widetilde{q}_0 = q_0 - \oq.
  \end{align}

We mention that, the (global) well-posedness of the system \eqref{sys:Re-Gray-Scott} near steady (equilibrium) state is converted equivalently to the problem of the (global) existence of the perturbative system \eqref{sys:perturb-2} near zero solution.

\begin{theorem}[Global well-posedness]\label{thm:global-exist-2}
Let the domain $\Omega$ be the whole space $\R^3$. Then there exists a small constant $\nu > 0$, such that, if
  \begin{align}\label{IC-2-global}
    E_g^{in} = \abs{(\widetilde{u}_0,\widetilde{v}_0,\widetilde{p}_0,\widetilde{q}_0)}_{H^1}^2 \leq \nu \,,
  \end{align}
the solution to the Cauchy problem of \eqref{sys:Re-Gray-Scott} constructed above can be extended globally, with a global-in-time energy bound, i.e.,
  \begin{align}\label{eq:energy-bound-global-2}
   & \sup\limits_{t\geq 0} ( k_0^+k_1^+k_2^+ \abs{ \widetilde{u} }_{H^1}^2 + k_0^+k_1^-k_2^+ \abs{ \widetilde{v} }_{H^1}^2 + k_0^+k_1^-k_2^- \abs{ \widetilde{p} }_{H^1}^2 + k_0^-k_1^+k_2^+\abs{ \widetilde{q} }_{H^1}^2)  \no\\
   & + \int_{0}^{\infty} ( d_uk_0^+k_1^+k_2^+\abs{ \nabla_x \widetilde{u} }_{H^1}^2 + d_vk_0^+k_1^-k_2^+ \abs{ \nabla_x \widetilde{v} }_{H^1}^2 + d_p k_0^+k_1^-k_2^- | \nabla_x \widetilde{p} |_{H^1}^2 + d_qk_0^-k_1^+k_2^+ | \nabla_x \widetilde{q} |_{H^1}^2  \no\\
   & + k_0^+ k_2^+ \ov^2 | k_1^+ \widetilde{u} - k_1^-  \widetilde{v} |_{H^1}^2 + k_1^+ k_2^+ | k_0^+ \widetilde{u} - k_0^- \widetilde{q} |_{H^1}^2 + k_0^+ k_1^- \ov^2 | k_2^+ \widetilde{v} - k_2^-  \widetilde{p} |_{H^1}^2 )\d t \no\\
   & \leq C_g \nu,
  \end{align}
here $C_g$ is a constant depending only on the coefficients.
\end{theorem}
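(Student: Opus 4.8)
The plan is to prove Theorem~\ref{thm:global-exist-2} by a continuation argument based on a closed weighted $H^1$ energy estimate for the perturbative system~\eqref{sys:perturb-2}, taking the local solution from Theorem~\ref{thm:local-existence-convergence} as the point of departure. The central object is the weighted energy
\begin{align*}
  \mathcal E(t) = k_0^+k_1^+k_2^+\abs{\widetilde u}_{H^1}^2 + k_0^+k_1^-k_2^+\abs{\widetilde v}_{H^1}^2 + k_0^+k_1^-k_2^-\abs{\widetilde p}_{H^1}^2 + k_0^-k_1^+k_2^+\abs{\widetilde q}_{H^1}^2,
\end{align*}
paired with the dissipation $\mathcal D(t)$ collecting the diffusion and reaction terms displayed in~\eqref{eq:energy-bound-global-2}. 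The four weights $(a,b,c,e)=(k_0^+k_1^+k_2^+,k_0^+k_1^-k_2^+,k_0^+k_1^-k_2^-,k_0^-k_1^+k_2^+)$ are not free: writing the linearized reaction combinations $R_0=k_0^+\widetilde u-k_0^-\widetilde q$, $R_1=k_1^+\widetilde u-k_1^-\widetilde v$, $R_2=k_2^+\widetilde v-k_2^-\widetilde p$, the detailed-balance identities $k_1^+\ou=k_1^-\ov$, $k_0^+\ou=k_0^-\oq$, $k_2^+\ov=k_2^-\op$ force exactly $-a\widetilde u+e\widetilde q=-k_1^+k_2^+R_0$, $-a\widetilde u+b\widetilde v=-k_0^+k_2^+R_1$ and $-b\widetilde v+c\widetilde p=-k_0^+k_1^-R_2$. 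This is the algebraic shadow of the entropy structure~\eqref{energy-dissipation law}, and it is precisely what makes the linearized reaction operator dissipative in the weighted inner product.

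First I would test the four equations of~\eqref{sys:perturb-2} against $a\widetilde u,\,b\widetilde v,\,c\widetilde p,\,e\widetilde q$ and sum. Integration by parts turns the diffusion terms into $-(d_ua\abs{\nabla\widetilde u}_{L^2}^2+\cdots)$, while by the weight identities the linearized reaction terms collapse into the negative-definite combination of $\abs{R_0}_{L^2}^2$, $\abs{R_1}_{L^2}^2$, $\abs{R_2}_{L^2}^2$ recorded in~\eqref{eq:energy-bound-global-2}. Applying $\p_x^k$ with $|k|=1$ and testing against $a\p_x^k\widetilde u$, etc., reproduces the identical structure at the level of first derivatives, since the reaction part is linear with constant coefficients and commutes with $\p_x^k$; adding the two levels gives $\tfrac{\d}{\d t}\mathcal E+\mathcal D=\mathcal N$, where $\mathcal N$ gathers all contributions of the genuinely nonlinear reaction terms.

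The nonlinear terms sit only in the $\widetilde u$- and $\widetilde v$-equations, coming from $r_1=(\ov+\widetilde v)^2R_1$; after weighting they appear as $\pm(2\ov\widetilde v+\widetilde v^2)R_1$ tested against $-a\widetilde u+b\widetilde v=-k_0^+k_2^+R_1$ and its derivatives, so every summand of $\mathcal N$ carries at least one factor of the small quantity $\widetilde v$ or $\nabla\widetilde v$ multiplying a product of dissipation factors. I would bound these in three dimensions by Gagliardo--Nirenberg interpolation and the Sobolev embedding $\dot H^1(\R^3)\hookrightarrow L^6$, always peeling off one gradient norm as an $\mathcal E^{1/2}$ factor (using that $\abs{\nabla\widetilde v}_{L^2}^2$ already lives inside $\mathcal E$) and absorbing the remaining first- and second-order factors into $\mathcal D$. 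This yields $\abs{\mathcal N}\le C(\mathcal E^{1/2}+\mathcal E)\mathcal D$, hence the differential inequality
\begin{align*}
  \tfrac{\d}{\d t}\mathcal E+\mathcal D\le C\bigl(\mathcal E^{1/2}+\mathcal E\bigr)\mathcal D.
\end{align*}
To close, set $T^\star=\sup\{T>0:\sup_{[0,T]}\mathcal E\le\delta_0\}$ with $\delta_0$ so small that $C(\delta_0^{1/2}+\delta_0)\le\tfrac12$; on $[0,T^\star)$ the inequality becomes $\tfrac{\d}{\d t}\mathcal E+\tfrac12\mathcal D\le0$, so $\mathcal E$ is non-increasing and $\mathcal E(t)+\tfrac12\int_0^t\mathcal D\,\d\tau\le\mathcal E(0)\le C_g\nu$. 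Choosing $\nu$ with $C_g\nu<\delta_0$ strictly improves the defining bound, so the local theory and continuity force $T^\star=\infty$, and the resulting global estimate is exactly~\eqref{eq:energy-bound-global-2}.

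The main obstacle is the control of $\mathcal N$. Because the leading nonlinearity is only \emph{quadratic} and the domain $\R^3$ carries no Poincar\'e inequality (so $\mathcal D$ cannot dominate $\mathcal E$), a brute-force absorption is impossible; success rests on the structural fact that the weight cancellation forces every nonlinear contribution to carry the reaction factor $R_1$ together with a factor of $\widetilde v$ or $\nabla\widetilde v$, combined with a sharp three-dimensional interpolation that extracts exactly one power of $\mathcal E^{1/2}$ while retaining a full power of $\mathcal D$. Performing this bookkeeping correctly at the first-derivative level, where the Leibniz rule spreads the derivative across three interacting factors, is the delicate part of the argument.
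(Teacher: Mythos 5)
Your proposal is correct and follows essentially the same route as the paper: the same weighted energy $E_g$ and dissipation $D_g$ with weights $(k_0^+k_1^+k_2^+,k_0^+k_1^-k_2^+,k_0^+k_1^-k_2^-,k_0^-k_1^+k_2^+)$ chosen so that the linearized reaction terms collapse (via detailed balance) into the squared combinations $|k_0^+\widetilde u-k_0^-\widetilde q|^2$, $|k_1^+\widetilde u-k_1^-\widetilde v|^2$, $|k_2^+\widetilde v-k_2^-\widetilde p|^2$, the same nonlinear bound of the form $C(E_g^{1/2}+E_g)D_g$ exploiting that every nonlinearity carries a factor $\widetilde v$ times $k_1^+\widetilde u-k_1^-\widetilde v$, and the same smallness-plus-continuity bootstrap to conclude $T^\star=\infty$. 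This matches Lemma \ref{lemma Re-GS-globle-2} and the subsequent proof of Theorem \ref{thm:global-exist-2} in all essentials.
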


Notice that we can establish a similar global-in-time existence result near equilibrium in more regular space (say, $H^s$ for any $s \ge 2$), by almost the same argument. As a result, we refer this type of solution as classical solution due to the Sobolev embedding theorems in $L^\infty$ or in more regular H{\"o}lder spaces.

Due to the smallness of classical solutions near the (positive) equilibrium, the obtained solutions will always be strictly positive, so that the calculations on concentrations $(u,v,p,q)$ like taking logarithms make sense. Precisely speaking, our classical solutions satisfy the entropy-entropy production identity \eqref{energy-dissipation law}. This ensures the consistency of the reversible Gray-Scott model \eqref{sys:Re-Gray-Scott} with the thermodynamics theory.

For the well-posedness of irreversible Gray-Scott system \eqref{sys:Ir-Gray-Scott}, we can also use the energy estimate to state its well-posedness in the space $L^\infty(0, T; H^1 (\Omega)) \cap L^2 (0, T; H^2 (\Omega))$. There are many well-posedness results about the irreversible Gray-Scott model (\cite{KWW-05aml,Wei-99nonl,Wei-01physD}).

\subsection{Main ideas and difficulties}

We now sketch the main ideas of proving the above theorems. When we construct the local convergence result to the system \ref{sys:Re-Gray-Scott-eps}, our main goal is to derive the uniform energy estimates of the system \ref{sys:Re-Gray-Scott-eps} according to the energy estimate in the previous existence Theorem. Based on the local-in-time eneregy estimate uniformly in $\eps\in (0,1)$, we take the limit from the system \ref{sys:Re-Gray-Scott-eps} to the irreversible system \eqref{sys:Ir-Gray-Scott} as $\eps\rightarrow 0$. We mainly employ the Aubin-Lions-Simon's Theorem to obtain enough compactness such that the limits valid.

The second main result of this paper is to prove the global well-posedness result to the system \eqref{sys:Re-Gray-Scott} with small initial data around the equilibrium $(\ou, \ov, \op, \oq)$. The key point is that the norm $\abs{k_1^+ \widetilde{u} - k_1^- \widetilde{v}}_{L^2}^2$, $\abs{k_0^+ \widetilde{u} - k_0^- \widetilde{q}}_{L^2}^2$ and $\abs{k_2^+ \widetilde{v} - k_2^- \widetilde{p}}_{L^2}^2$ constructing in the dissipative term. Due to the linear terms in the perturbation system near $(\ou, \ov, \op, \oq)$  can not absorbed by the dissipative term in the left-hand side, more precisely, the first $\widetilde{u}$-equation in perturbation system \eqref{sys:perturb-2} is
  \begin{align*}
    \widetilde{u}_t = d_u \Delta \widetilde{u} - k_1^+ \widetilde{u} \widetilde{v}^2 + k_1^- \widetilde{v}^3 - 2k_1^+ \ov \widetilde{u}\widetilde{v} + 2k_1^- \ov \widetilde{v}^2 - k_1^+ \ov^2 \widetilde{u} + k_1^- \ov^2 \widetilde{v} - k_0^+ \widetilde{u} + k_0^- \widetilde{q},
  \end{align*}
in which contains the linear terms $k_0^+ \widetilde{u}$ and $k_0^- \widetilde{q}$. The treatment of these two linear terms is crucial in proving the $L^2$ estimate of proving the global well-posedness result. It is a key observation that we can combine all the linear terms in the perturbation system of \eqref{sys:Re-Gray-Scott}. We multiply by some coefficients to construct the two perfect square expression $\abs{k_0^+ \widetilde{u} - k_0^- \widetilde{q}}^2_{L^2}$ and $\abs{k_2^+ \widetilde{v} - k_0^- \widetilde{p}}^2_{L^2}$, which can be designed as dissipation term.  Generally speaking, in order to prove the global well-posedness with small initial data, one often should obtain the following type energy inequality
  \begin{align*}
    \tfrac{\d }{\d t} E_g(t) + D_g (t) \leq P(E_g(t))D_g(t).
  \end{align*}
Thus the term $P(E_g(t))D_g(t)$ can be absorbed by the diffusion term $D_g (t)$ due to the small assumption on the initial data. Moreover, besides the linear term $k_0^+ \widetilde{u}$, $k_2^+ \widetilde{v}$, $k_0^- \widetilde{q}$, $k_2^- \widetilde{p}$, there are two linear terms $k_1^+\ov^2 \widetilde{u}$, $ k_1^- \ov^2 \widetilde{v}$, coming from the perturbation $u = \ou + \widetilde{u}$ and $v = \ov + \widetilde{v}$. The main obstacle to prove the global existence results comes from the linear term in the $L^2$ estimate, our novelty is to seek an extra elimination relationship to overcome this difficulty. To be more precise, we design the dissipative term $\abs{k_1^+ \widetilde{u} - k_1^- \widetilde{v}}_{L^2}^2$ constructed by the linear terms $k_1^+\ov^2 \widetilde{u}$, $ k_1^- \ov^2 \widetilde{v}$. According to the chemical relation $k_1^+ \ou = k_1^- \ov$, the other terms of the system \eqref{sys:Re-Gray-Scott} around the equilibrium state $(\ou, \ov, \op, \oq)$ can be constructed as $P(E_g(t))D_g(t)$ in the $L^2$ estimate, i.e.,
  \begin{align*}
    \l -2k_1^+\ov \widetilde{u} \widetilde{v} + 2k_1^-\ov \widetilde{v}^2, \widetilde{u} \r_{L^2} \leq 2\ov \abs{\widetilde{v}}_{L^3}\abs{\widetilde{u}}_{L^6}\abs{k_1^+ \widetilde{u} - k_1^- \widetilde{v}}_{L^2},\\
    \l 2k_1^+\ov \widetilde{u} \widetilde{v} - 2k_1^-\ov \widetilde{v}^2, \widetilde{v} \r_{L^2} \leq 2\ov \abs{\widetilde{v}}_{L^3}\abs{\widetilde{v}}_{L^6}\abs{k_1^+ \widetilde{u} - k_1^- \widetilde{v}}_{L^2}.
  \end{align*}
Combine the Sobolev inequality and the small initial assumption, the right-hand term can be absorbed by the diffusion term. The above difficulties will vanish when we prove the global existence with small size of initial data. Then we derive the following energy inequality
  \begin{align*}
    \tfrac{\d }{\d t} E_g(t) + D_g (t) \leq (1 + E_g^{\frac{1}{2}}(t))E_g^{\frac{1}{2}}(t)D_g(t).
  \end{align*}
Based on the continuity arguments, one thereby verify the global well-posedness.

\subsection{Organizations of current paper}

The rest of this paper is as follows: a formal derivation of this four-species reaction-diffusion system of reversible Gray-Scott type model \eqref{sys:Re-Gray-Scott} will be given, by using the EnVarA, in the following section \S \ref{sec:derivation_ReGS}, containing the derivations for the mechanical and chemical reaction part.

In the next section \S \ref{Sec: well-posedness to the Re-GS-1 model}, we prove the local well-posedness and the local convergence of the reversible Gray-Scott system \eqref{sys:Re-Gray-Scott}. We first derive the a priori estimates in Lemma \ref{lemma local-Re-GS-1}. Then, based on the a priori estimates, we prove the large local solution by continuity arguments. Furthermore, we obtain the uniform bound energy estimate and derive the local convergence from \ref{sys:Re-Gray-Scott-eps} to \eqref{sys:Ir-Gray-Scott} by using the Aubin-Lions-Simon's Theorem.

In Section \S \ref{Sec: global-wellposedness to the Re-GS-2 model}, based on the local existence in the Theorem \ref{thm:local-existence-convergence} and the assumption on the smallness of initial data, the local exsitence can be extended globally in time.
\section{Derivation of the Reversible Gray-Scott-like Model} 
\label{sec:derivation_ReGS}

Our aim in this section is to derive the model \eqref{sys:Re-Gray-Scott} by using the \emph{energetic variational approach}. Two main ingredients are included in this approach: the \emph{least action principle} and the \emph{maximum dissipation principle}, which derive the conservative force and the dissipative force respectively, and force balance relation will lead to the final PDE system.  We split the deviation procedure into two steps: spatial diffusion part and chemical reaction part.

Note firstly that system \eqref{sys:Re-Gray-Scott} satisfies the following kinematics:
  \begin{align}\label{eq:kinematics-x}
  \begin{cases}
    \p_t u + \div_x (u \textbf{u}_u) = -r_1 - r_0, \\
    \p_t v + \div_x (v \textbf{u}_v) = r_1 - r_2, \\
    \p_t p + \div_x (p \textbf{u}_p) = r_2, \\
    \p_t q + \div_x (q \textbf{u}_q) = r_0,
  \end{cases}
  \end{align}
where $r_i$'s ($i=0,\,1,\,2$) are respectively the total reaction rates of the chemical reaction associating to the reversible chemical reaction process of cubic autocatalysis \eqref{eq:cubic-cata-re}. They are defined, as mentioned before, through the law of mass action:
  \begin{align}\label{eq:rates}
    r_0 = k_0^+ u - k_0^- q, \quad
    r_1 = k_1^+ u v^2 - k_1^- v^3, \quad
    r_2 = k_2^+ v - k_2^- p.
  \end{align}
We also point out that $\textbf{u}_\alpha$ is the induced velocity by the diffusion process of each species $\alpha=u,\,v,\,p,\,q$.

The energy-dissipation law obeyed by the reversible chemical reaction scheme \eqref{eq:cubic-cata-re} can be formulated as
  \begin{align}
    \frac{\d}{\d t} \mathcal{F} (u,\,v,\,p,\,q) = - \mathcal{D}_{d} - \mathcal{D}_{r},
  \end{align}
where the free energy $\mathcal{F}$ takes an entropy formulation:
  \begin{align}
    \mathcal{F} (u,\,v,\,p,\,q)
    = \int_{\Omega} \left[ u (\ln \tfrac{u}{\bar u} -1 ) + v (\ln \tfrac{v}{\bar v} -1 )
                  + p (\ln \tfrac{p}{\bar p} -1 ) + q (\ln \tfrac{q}{\bar q} -1 ) \right]  \d x,
  \end{align}
and the dissipation consists of two types of contributions: one comes from spatial diffusion and the other from chemical reaction, represented by $\mathcal{D}_{d}$ and $\mathcal{D}_{r}$ respectively,
  \begin{align*}
    \mathcal{D}_d &= \int_{\Omega} \left(\tfrac{d_u}{u} \abs{\nabla u}^2 + \tfrac{d_v}{v} \abs{\nabla v}^2 + \tfrac{d_p}{p} \abs{\nabla p}^2 + \tfrac{d_q}{q} \abs{\nabla q}^2 \right) \d x, \\
    \mathcal{D}_r &= \int_{\Omega} (k_0^+ u - k_0^- q) \ln \tfrac{k_0^+ u}{k_0^- q} \d x
     + \int_{\Omega} (k_1^+ u v^2 - k_1^- v^3) \ln \tfrac{k_1^+ u}{k_1^- v} \d x + \int_{\Omega} (k_2^+ v - k_2^- p) \ln \tfrac{k_2^+ v}{k_2^- p} \d x.
  \end{align*}

\subsection{EnVarA for the diffusion part} 
\label{sub:envara_for_diffusion_part}


Associated to the velocity $\textbf{u}_\alpha$, we can define flow map $x(X,\,t):\, \Omega \to \Omega$, in which $X$ denotes the Lagrangian coordinates and $x$ denotes Eulerian coordinates, by the following ordinary differential equation (ODE):
  \begin{align}
  \begin{cases}
    \frac{\d}{\d t} x(X,\, t) = \textbf{u}_\alpha(x(X,\, t),\,t), \\[2pt]
    x(X,\, 0) = X.
  \end{cases}
  \end{align}
We also define the deformation gradient by
  \begin{align}
    F(X,t)= \frac{\p x}{\p X}.
  \end{align}

For the sake of exposition, we denote $\textbf{c}=(c_u,\,c_v,\,c_p,\,c_q)=(u,\,v,\,p,\,q)$, then
  \begin{align}\label{eq:energy}
    \mathcal{F} (\textbf{c}) = \mathcal{F}  (u,\,v,\,p,\,q)
    = \int_{\Omega^t} \sum_{\alpha} c_{\alpha} (\ln \frac{c_{\alpha}}{\bar c_{\alpha}} - 1) \d x,
  \end{align}
so we can infer from the least action principle (LAP) that
  \begin{align}
    \delta_x \int_0^T \mathcal{F} (\textbf{c}) \d t
    & = \delta_x \int_0^T \int_{\Omega^0} \sum_{\alpha} \frac{c_{\alpha}^0}{\det F} \left( \ln \frac{c_{\alpha}^0}{\det F} - \ln \bar c_{\alpha} -1 \right) \det F \d X \d t \\\no
    & = \int_0^T \int_{\Omega^0} c_{\alpha}^0 \left( - \frac{\det F}{c_{\alpha}^0} \cdot \frac{c_{\alpha}^0}{(\det F)^2} \cdot \det F\, \mathrm{tr} (\frac{\p X}{\p x} \frac{\p \delta x}{\p X}) \right) \d X \d t \\\no
    & = - \int_0^T \int_{\Omega^0} c_{\alpha}^0 \, \mathrm{tr} (\frac{\p \delta x}{\p x}) \d X \d t \\\no
    & = - \int_0^T \int_{\Omega^t} c_{\alpha} \nabla_x \cdot (\delta x) \d x \d t \\\no
    & = \int_0^T \int_{\Omega^t} \nabla_x c_{\alpha} \cdot \delta x \d x \d t,
  \end{align}
where we have used the matrix equality $\tfrac{\d}{\d x} \det A(x) = \det A \cdot \mathrm{tr}(A^{-1} \tfrac{\d}{\d x}A)$ in the second line. We thus get
  \begin{align}
    \delta_x \int_0^T \mathcal{F} (\textbf{c}) \d t = \skp{\nabla_x c_{\alpha}}{\delta x}_{L^2_{t,x}}.
  \end{align}

We next turn to consider the dissipation $\mathcal{D}_d$ due to the diffusion contribution, which often takes the form of
  \begin{align}
    \mathcal{D}_d = \int_{\Omega} \sum_{\alpha} \tfrac{c_{\alpha}}{d_{\alpha}} \abs{\textbf{u}_{\alpha}}^2 \d x.
  \end{align}
The maximum dissipation principle (MDP) gives that
  \begin{align}
    \delta_{\textbf{u}_{\alpha}} \mathcal{D}_d = \skp{ \tfrac{c_{\alpha}}{d_{\alpha}} \textbf{u}_{\alpha} }{\delta {\textbf{u}_{\alpha}}}_{L^2_{x}}.
  \end{align}

We can get by the force balance $L^2_{t,x-} \tfrac{\delta \int \mathcal{E} \d t}{\delta x} + L^2_{x-} \tfrac{\delta \mathcal{D}}{\delta \dot x} =0$ that,
  \begin{align}
    \nabla_x c_{\alpha} + \tfrac{c_{\alpha}}{d_{\alpha}} \textbf{u}_{\alpha} = 0,
  \end{align}
namely,
  \begin{align}
    \textbf{u}_{\alpha} = -d_{\alpha} \nabla_x \ln c_{\alpha}.
  \end{align}
Inserting this into the kinematic relation \eqref{eq:kinematics-x} yields finally the exact expression of diffusion.

\subsection{EnVarA for the reaction part} 
\label{sub:envara_for_reaction_part}


As explained in the last section, a notion of general EnVarA is needed to deal with the chemical reactions. For that, noticing the conservation constraints of elements satisfied by the chemical reaction process \eqref{eq:cubic-cata-re}, that
  \begin{align}
    \frac{\d}{\d t} (u + v + p + q)(t) = 0,
  \end{align}
we can employ the so-called reaction trajectories for the above reaction process, $R_i(t)$'s with $i=0,1,2$, satisfying
  \begin{align}
    \frac{\d}{\d t} R_i(t) = \dot R_i(t) = r_i.
  \end{align}
Therefore, the concentrations of each species $c_{\alpha}$ (with $\alpha=u,\,v,\,p,\,q$) can be expressed as
  \begin{align}
  \begin{cases}
    u(t) = u_0 - R_1(t) - R_0(t), \\
    v(t) = v_0 + R_1(t) - R_2(t), \\
    p(t) = p_0 + R_2(t), \\
    q(t) = q_0 + R_0(t).
  \end{cases}
  \end{align}
The above relations may be regarded as the kinematics for the chemical reaction \eqref{eq:cubic-cata-re}.

The new state variable of reaction trajectory enables us to rewrite the free energy $\mathcal{F}$ by virtue of $\textbf{R}= (R_0,\,R_1,\,R_2)(t)$, i.e.,
  \begin{align}\label{eq:energy-R}
    \mathcal{F} (\textbf{R}) = \mathcal{F} (\textbf{c(R)}).
  \end{align}
Meanwhile, we also write the reaction dissipation part as $\mathcal{D}_r = \mathcal{D}_r (\textbf{R},\, \dot{\textbf{R}})$. So the energy-dissipation law for a pure chemical reaction process can be rewritten as
  \begin{align}\label{eq:EnDis-law-R}
    \frac{\d}{\d t} \mathcal{F} (\textbf{R}) = - \mathcal{D}_r (\textbf{R},\, \dot{\textbf{R}}).
  \end{align}

Since chemical reactions are often far away from equilibrium states, the reaction dissipation may not be quadratic with respect to $\dot{\textbf{R}}$. This fact is different from the quadratic dissipation functional in a mechanical system, which is also the reason that we need the notion of generalized EnVarA.

We assume that the nonnegative reaction dissipation $\mathcal{D}_r$ takes the form of
  \begin{align}
    \mathcal{D}_r (\textbf{R},\, \dot{\textbf{R}}) = \skp{\mathcal{G}_r(\textbf{R},\, \dot{\textbf{R}})}{\dot{\textbf{R}}},
  \end{align}
this, combining with the fact $\tfrac{\d}{\d t} \mathcal{F} (\textbf{R}) = \skp{\tfrac{\delta \mathcal{F}}{\delta \textbf{R}}}{\dot{\textbf{R}}}$, yields that,
  \begin{align}\label{eq:grad-flow-gnrl}
    \mathcal{G}_r (\textbf{R}, \dot{\textbf{R}}) = - \frac{\delta \mathcal{F}}{\delta \textbf{R}}.
  \end{align}
We refer this as a general gradient flow. The exact expression of $r_i$'s will be revisited by choosing
  \begin{align}
    \mathcal{D}_{r_0} (R_0,\, \dot R_0) & = \dot R_0 \ln \left( \tfrac{\dot R_0}{k_0^- q} +1\right), \\
    \mathcal{D}_{r_1} (R_1,\, \dot R_1) & = \dot R_1 \ln \left( \tfrac{\dot R_1}{k_1^- v^3} +1\right), \\
    \mathcal{D}_{r_2} (R_2,\, \dot R_2) & = \dot R_2 \ln \left( \tfrac{\dot R_2}{k_2^- p} +1\right).
  \end{align}
Indeed, for $i=0$, direct calculations imply that
  \begin{align}
    \frac{\delta \mathcal{F}}{\delta R_0}
    = \sum_{\alpha} \ln \frac{c_{\alpha}}{\bar c_{\alpha}} \cdot \frac{\p c_{\alpha}}{\p R_0}
    = - \ln \frac{u}{\bar u} + \ln \frac{q}{\bar q}
    = - \ln \frac{k_0^+ u}{k_0^- q},
  \end{align}
where we have used the equilibrium equality $k_0^+ \bar u = k_0^- \bar q$. As a result, the above general gradient flow \eqref{eq:grad-flow-gnrl} leads to the following relation:
  \begin{align}
    r_0 = \dot R_0 = k_0^+ u - k_0^- q,
  \end{align}
which is exactly the same formulation as Eq. \eqref{eq:rates} given by the law of mass action. The other two rates $r_1$ and $r_2$ can be derived by a similar process. Inserting these expressions into the kinematic relation \eqref{eq:kinematics-x} yields the expression of reaction part.

Finally, combining diffusion contribution and reaction contribution enables us to get the partial differential equations (PDEs) governing the reversible chemical reaction process of cubic autocatalysis, namely, system \eqref{sys:Re-Gray-Scott}. Note that this is a four-species reaction-diffusion system of reversible Gray-Scott type.

\section{Local Well-posedness and Convergence Towards the Irreversible System}\label{Sec: well-posedness to the Re-GS-1 model}

In this section, we will employ the energy method to prove the local in time existence of the system \eqref{sys:Re-Gray-Scott} with large initial data. Moreover, according to the energy bound in the existence result, we obtain the uniform energy bound of $(u^\eps, v^\eps, p^\eps, q^\eps)$ and $(u^\eps_t, v^\eps_t, p^\eps_t, q^\eps_t)$. Combining with the Aubin-Lions-Simon's Theorem, we derive the reversible-irreversible limit in local time.

\subsection{A priori estimate to the reversible Gray-Scott system}\label{A_Priori-local-1}
In this subsection, the a priori estimate of the system \eqref{sys:Re-Gray-Scott} will be accurately derived from employing the energy method. We now introduce the following energy functional $E_{L} (t)$ and energy dissipative rate functional $D_{L} (t)$:
  \begin{align}\label{Loc-Re-Energy-1}
    E_{L} (t) & = \abs{u}^2_{H^1} + \abs{v}^2_{H^1} + \abs{p}^2_{H^1} + \abs{q}^2_{H^1} \,,  \no\\
    D_{L} (t) & = \tfrac{d_u}{2}\abs{\nabla u }^2_{H^1}  + \tfrac{d_v}{2}\abs{\nabla v }^2_{H^1} +  d_p \abs{\nabla p }^2_{H^1} +  d_q \abs{\nabla q }^2_{H^1} + k_1^- \abs{v^2}_{H^1}^2 + k_1^+ \abs{uv}_{L^2}^2 \no\\
     &\quad  + k_1^+ \abs{\p u\cdot v }_{L^2}^2 + k_0^+\abs{u}_{H^1}^2  + k_2^+\abs{v}_{H^1}^2 + k_2^-\abs{p}_{H^1}^2 + k_0^-\abs{q}_{H^1}^2 \,.
  \end{align}
Now we state the priori estimate as follows:

  \begin{lemma}\label{lemma local-Re-GS-1}
    Assume that $(u(t,x), v(t,x), p(t,x), q(t,x))$ is a sufficiently smooth solution to system \eqref{sys:Re-Gray-Scott} on the interval $[0,T]$. Then there is a positive constant $C_L = C_L(d_u, d_v, d_p,$\, $d_q, k_0^+, k_0^-, k_1^+, k_1^-, k_2^+, k_2^-)>0$, such that
  \begin{align*}
    \tfrac{\d}{\d t} E_L (t) + D_L (t) \leq C_L ( E_L (t) + E_L^3 (t))
  \end{align*}
holds for all $t\in[0,T]$.
  \end{lemma}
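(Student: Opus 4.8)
The plan is to derive the differential inequality by performing the standard $H^1$ energy estimate on each equation of \eqref{sys:Re-Gray-Scott}, testing the $\alpha$-equation against $c_\alpha$ and against $\p_x^k c_\alpha$ for $|k|=1$, then summing. First I would treat the $L^2$-level estimate: multiply the $u$-equation by $u$, integrate over $\Omega$, integrate the diffusion term by parts to produce $d_u|\nabla u|_{L^2}^2$, and handle the reaction terms. The linear exchange terms $-k_0^+u+k_0^-q$, etc., are the origin of the dissipative contributions $k_0^+|u|_{L^2}^2$, $k_2^+|v|_{L^2}^2$, $k_2^-|p|_{L^2}^2$, $k_0^-|q|_{L^2}^2$ in $D_L(t)$; the off-diagonal cross terms (like $k_0^-\langle q,u\rangle$) are estimated by Young's inequality and absorbed into $E_L(t)$. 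The cubic autocatalytic terms are the crucial ones: testing $-k_1^+uv^2+k_1^-v^3$ against $u$ and $+k_1^+uv^2-k_1^-v^3$ against $v$, one collects the good sign structure. The term $k_1^-\langle v^3,v\rangle = k_1^-|v^2|_{L^2}^2$ and $k_1^+\langle uv^2,v\rangle=k_1^+|uv|_{L^2}^2$ are manifestly nonnegative and feed directly into $D_L(t)$; the remaining indefinite pieces such as $-k_1^+\langle uv^2,u\rangle$ must be bounded, using $|u^2v|_{L^1}\le|u|_{L^6}^2|v|_{L^{3/2}}$-type Hölder splittings controlled by $E_L$ and $E_L^3$.

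Next I would carry out the $\dot H^1$-level estimate: apply $\p_x^k$ with $|k|=1$ to each equation, test against $\p_x^k c_\alpha$, and integrate by parts to extract the higher-order diffusion $\tfrac{d_u}{2}|\nabla u|_{\dot H^1}^2$, etc. (The factors $\tfrac{1}{2}$ on $d_u,d_v$ in $D_L(t)$ anticipate that half of the diffusion must be spent absorbing the worst nonlinear terms via integration by parts, while the full $d_p,d_q$ survive since the $p,q$-equations are linear.) Differentiating the cubic term by Leibniz produces, among others, the contribution yielding $k_1^+|\p u\cdot v|_{L^2}^2$ in $D_L(t)$; the cross terms from $\p_x^k(uv^2)$ are the most delicate. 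I would bound these by Hölder with the Sobolev embedding $H^1(\Omega)\hookrightarrow L^6(\Omega)$ in three dimensions, e.g. $|\nabla u\, v^2|_{L^2}\le|\nabla u|_{L^6}|v|_{L^6}^2$, and then by interpolation and Young's inequality distribute the factors so that one power of the highest derivative is absorbed into the leftover half of $D_L(t)$ while the rest is polynomial in $E_L$, giving the $E_L+E_L^3$ right-hand side (the cubic power reflecting the degree-three autocatalytic nonlinearity).

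The main obstacle I expect is handling the top-order nonlinear terms arising from $\p_x^k(uv^2)$ and $\p_x^k(v^3)$ without any dissipative control on $u$ beyond its own diffusion. Since the nonlinearities are cubic and we work in $\Omega\subset\R^3$ where the critical Sobolev exponent is exactly $6$, the estimates are borderline: one has no room to spare in the exponents, so every Hölder split must be arranged to place at most one factor in $L^6$ at the highest derivative level and close via Young's inequality against the preserved portion of $\tfrac{d_u}{2}|\nabla u|_{\dot H^1}^2$. A secondary technical point is that $E_L(t)$ controls only $|{\cdot}|_{H^1}^2$, so products like $|u|_{L^6}^2|v|_{L^6}|\nabla v|_{L^6}$ must be repackaged as $C E_L^{k}$ for integer $k\le 3$ after spending a derivative into dissipation; bookkeeping that every such term is majorized by $C_L(E_L+E_L^3)$ plus an absorbable multiple of $D_L$ is where the constant $C_L$ depending on all the rate and diffusion coefficients is finally assembled. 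The linear and lower-order terms are routine; the scheme succeeds precisely because the autocatalytic structure produces the nonnegative dissipative quantities $k_1^-|v^2|_{H^1}^2$, $k_1^+|uv|_{L^2}^2$, $k_1^+|\p u\cdot v|_{L^2}^2$ built into $D_L(t)$, which is what lets the otherwise sign-indefinite cubic contributions be controlled.
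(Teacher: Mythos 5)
Your overall scheme -- $L^2$ plus first-order derivative estimates, extracting nonnegative dissipation from the cubic reaction terms, bounding the remaining nonlinearities by H\"older with $H^1\hookrightarrow L^4,L^6$ and Young, and reserving half of the $d_u,d_v$ diffusion for absorption while keeping $d_p,d_q$ intact -- is exactly the paper's proof. However, you have the sign structure of the cubic terms, which you yourself call the crucial ones, exactly backwards at the $L^2$ level, and one of your stated identities is false. Testing the $u$-equation against $u$, the term $-k_1^+\langle uv^2,u\rangle=-k_1^+\int u^2v^2\,\d x=-k_1^+\abs{uv}_{L^2}^2$ is not ``indefinite'': it is precisely the nonnegative dissipation $k_1^+\abs{uv}_{L^2}^2$ appearing in $D_L(t)$ (this is how \eqref{Re-GS-L2-u} is obtained). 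Conversely, $\langle k_1^+uv^2,v\rangle=k_1^+\int uv^3\,\d x$ is \emph{not} equal to $k_1^+\abs{uv}_{L^2}^2$ (the integrand is $uv^3$, not $u^2v^2$), and it has no sign; it and its twin $\langle k_1^- v^3,u\rangle=k_1^-\int uv^3\,\d x$ from the $u$-equation are the genuinely bad cross terms. Following your plan literally, both of these quartic cross terms go unestimated -- one because you believe it is part of the dissipation, the other because you never mention it -- so the argument as written does not close. (Your sample H\"older split $\abs{u^2v}_{L^1}\le\abs{u}_{L^6}^2\abs{v}_{L^{3/2}}$ also does not correspond to any term that actually arises: every cubic nonlinearity tested in $L^2$ produces a quartic integrand.)

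The repair is simple and is what the paper does: the \emph{diagonal} pairings give the dissipation ($k_1^+\abs{uv}_{L^2}^2$ from the $u$-equation tested against $u$, $k_1^-\abs{v^2}_{L^2}^2$ from the $v$-equation tested against $v$, as in \eqref{Re-GS-L2-u}--\eqref{Re-GS-L2-v}), while the \emph{cross} pairings are bounded by $k_1^{\pm}\abs{u}_{L^4}\abs{v}_{L^4}^3\le C\,k_1^{\pm}\abs{u}_{H^1}\abs{v}_{H^1}^3\le C E_L^2\le \tfrac{C}{2}\left(E_L+E_L^3\right)$, using $H^1(\Omega)\hookrightarrow L^4(\Omega)$ and $E_L(1-E_L)^2\ge 0$. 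With that swap made, your $H^1$-level discussion -- the Leibniz expansion of $\p(uv^2)$, the origin of $k_1^+\abs{\p u\cdot v}_{L^2}^2$ and of $3k_1^-\abs{v\,\p v}_{L^2}^2$, and absorbing one top-order factor into $\tfrac{d_u}{2}\abs{\nabla u}_{H^1}^2$ via Young to produce the $\abs{u}_{H^1}^2\abs{v}_{H^1}^4$ and $\abs{v}_{H^1}^6$ contributions -- is correct and coincides with \eqref{Re-GS-H1-u}--\eqref{Re-GS-local-1}.
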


\begin{proof}[Proof of Lemma \ref{lemma local-Re-GS-1}]
We first derive the $L^2$-estimate, which will contain the major structures of the energy functionals. Then we estimate the higher order energy bound, which shall be consistent with the structures of $L^2$-estimate.

\bigskip\noindent\textbf{Step 1. $L^2$ estimates.}

We take $L^2$-inner product with $u(t,x)$ on the first equation of system \eqref{sys:Re-Gray-Scott}, and integrate by parts over $x\in \Omega$ . We thereby have
  \begin{align}\label{Re-GS-L2-u}
     \tfrac{1}{2} \tfrac{\d}{\d t} \abs{u}_{L^2}^2 + d_u \abs{\nabla u}_{L^2}^2 + k_1^+ \abs{uv}_{L^2}^2 + k_0^+ \abs{u}_{L^2}^2 \leq  k_1^{-}\abs{v}_{L^4}^3\abs{u}_{L^4} + k_0^- \abs{u}_{L^2}\abs{ q}_{L^2}.
  \end{align}
Based on the integration by parts, the H\"{o}lder inequality, from taking $L^2$-inner product with $v(t,x)$ on the second equation of system \eqref{sys:Re-Gray-Scott}, integrating by parts over $x\in \Omega$, we derive
  \begin{align}\label{Re-GS-L2-v}
   \tfrac{1}{2} \tfrac{\d}{\d t} \abs{v}_{L^2}^2 + d_v | \nabla v |_{L^2}^2 + k_1^{-}\abs{v^2}_{L^2}^2 + k_2^+ \abs{ v}_{L^2}^2 \leq k_1^+\abs{u}_{L^4} \abs{v}_{L^4}^3 +  k_2^- \abs{ v}_{L^2}\abs{ p}_{L^2} .
  \end{align}
We then multiply the third equation of system \eqref{sys:Re-Gray-Scott} by $p(t,x)$, and integrate by parts over $x\in \Omega$. We thereby obtain
  \begin{align}\label{Re-GS-L2-p}
    \tfrac{1}{2} \tfrac{\d}{\d t} \abs{p}_{L^2}^2 + d_p \abs{\nabla p }_{L^2}^2 + k_2^- \abs{ p}_{L^2}^2 \leq  k_2^+ \abs{ v}_{L^2}\abs{ p}_{L^2} .
  \end{align}
By taking $L^2$-inner product with $q(t,x)$ on the forth equation of system \eqref{sys:Re-Gray-Scott}, and integrating by parts over $x\in \Omega$, we have
  \begin{align}\label{Re-GS-L2-q}
    \tfrac{1}{2} \tfrac{\d}{\d t} \abs{q}_{L^2}^2 + d_q \abs{\nabla q }_{L^2}^2 + k_0^- \abs{ q}_{L^2}^2  \leq  k_0^+ \abs{ u}_{L^2}\abs{ q}_{L^2}.
  \end{align}
Adding the inequalities \eqref{Re-GS-L2-u}, \eqref{Re-GS-L2-v}, \eqref{Re-GS-L2-p} and \eqref{Re-GS-L2-q}, and combining the Sobolev embedding $H^1(\Omega)\hookrightarrow L^4(\Omega)$, we see that
  \begin{align}\label{Re-GS-L2}
    & \tfrac{1}{2} \tfrac{\d}{\d t} ( \abs{u}_{L^2}^2 + \abs{v}_{L^2}^2 +  \abs{p}_{L^2}^2 +  \abs{q}_{L^2}^2) + d_u \abs{\nabla u }_{L^2}^2 + d_v \abs{\nabla v }_{L^2}^2 +  d_p\abs{\nabla p }_{L^2}^2 + d_q  \abs{\nabla q }_{L^2}^2 + k_1^+ \abs{uv}_{L^2}^2 \no\\
    & \quad + k_1^-  \abs{v^2}_{L^2}^2 + k_0^+ \abs{u}_{L^2}^2 + k_2^+ \abs{v}_{L^2}^2 + k_2^- \abs{p}_{L^2}^2 + k_0^- \abs{q}_{L^2}^2  \no\\
    & \leq k_1^- \abs{v}_{H^1}^3 \abs{u}_{H^1} + k_1^+ \abs{u}_{H^1}\abs{v}_{H^1}^3 + k_0^- \abs{q}_{L^2}\abs{u}_{L^2} + k_2^- \abs{v}_{L^2}\abs{p}_{L^2} + k_2^+ \abs{v}_{L^2}\abs{p}_{L^2} \no\\
    & \quad + k_0^+ \abs{q}_{L^2}\abs{u}_{L^2}.
  \end{align}

\bigskip\noindent\textbf{Step 2. $H^1$ estimates.}

First, we act derivative on the first equation of system $\eqref{sys:Re-Gray-Scott}$, take $L^2$-inner product by dot with $\p_x u$ and integrate by parts over $x \in \Omega$. We thereby have
  \begin{align}\label{Re-GS-H1-u}
    \tfrac{1}{2}& \tfrac{\d}{\d t} \abs{\p u}_{L^2}^2 + d_u | \p  \nabla u |_{L^2}^2 + k_1^+ \abs{\p u\cdot v }_{L^2}^2 + k_0^+\abs{\p u}_{L^2}^2 \no\\
                & \leq  2 k_1^+ \abs{u}_{L^6} \abs{v}_{L^6} \abs{\p u}_{L^6} \abs{\p v}_{L^2} + 3 k_1^- \abs{v}_{L^6}^2 \abs{\p u}_{L^6} \abs{\p v}_{L^2} + k_0^- \abs{\p q}_{L^2} \abs{\p u}_{L^2} \no\\
                & \leq \tfrac{d_u}{2} \abs{\nabla u}_{H^1}^2 + \tfrac{4(k_1^+)^2}{d_u} \abs{u}_{H^1}^2\abs{v}_{H^1}^4  + \tfrac{9(k_1^-)^2}{d_u} \abs{v}_{H^1}^6 + k_0^- \abs{\p q}_{L^2} \abs{\p u}_{L^2}.
  \end{align}

For the second equation of system $\eqref{sys:Re-Gray-Scott}$, act derivative and take $L^2$-inner product by dot with $\p_x v$ and integrate by parts over $x\in \Omega$ . We obtain
  \begin{align}\label{Re-GS-H1-v}
    \tfrac{1}{2} & \tfrac{\d}{\d t} \abs{\p v}_{L^2}^2 + d_v | \p \nabla v|_{L^2}^2  + 3k_1^- | v \cdot \p v|_{L^2}^2 + k_2^+ \abs{\p v}_{L^2}^2 \no\\
                 & \leq  k_1^+ \abs{v}_{L^6}^2 \abs{\p u}_{L^2}\abs{\p v}_{L^6} + 2 k_1^+ \abs{u}_{L^6} |  v |_{L^6} \abs{\p v}_{L^6}\abs{\p v}_{L^2} + k_2^- \abs{\p p}_{L^2} \abs{\p v}_{L^2}\no\\
                 & \leq \tfrac{d_v}{2} \abs{\nabla v}_{H^1}^2 + \tfrac{(k_1^+)^2}{d_v} \abs{u}_{H^1}^2\abs{v}_{H^1}^4  + \tfrac{4(k_1^+)^2}{d_v} \abs{u}_{H^1}^2 \abs{v}_{H^1}^4 + k_2^- \abs{\p p}_{L^2} \abs{\p v}_{L^2}.
  \end{align}

We next act derivative on the third equation of system $\eqref{sys:Re-Gray-Scott}$, take $L^2$-inner product by dot with $\p_x p$ and integrate by parts over $x\in \Omega$. Then we have,
  \begin{align}\label{Re-GS-H1-p}
    \tfrac{1}{2} \tfrac{\d}{\d t} | \p p |_{L^2}^2 + d_p \abs{\p \nabla p}_{L^2}^2 + k_2^- \abs{\p p}_{L^2}^2  \leq  k_2^+ \abs{\p v}_{L^2} \abs{\p p}_{L^2} .
  \end{align}

Next, from acting derivative on the forth equation of system \eqref{sys:Re-Gray-Scott}, taking $L^2$-inner product by dot with $\p_x q$ and integrating by parts over $x\in \Omega$, we deduce that
  \begin{align}\label{Re-GS-H1-q}
    \tfrac{1}{2} \tfrac{\d}{\d t} | \p q |_{L^2}^2 + d_q \abs{\p \nabla q}_{L^2}^2 + k_0^- \abs{\p q}_{L^2}^2  \leq  k_0^+ \abs{\p u}_{L^2} \abs{\p q}_{L^2}.
  \end{align}

Due to the Sobolev embedding $H^1 (\Omega)\hookrightarrow L^4(\Omega)$ and $H^1(\Omega)\hookrightarrow L^6(\Omega)$, combine with the $L^2$ estimate, it is therefore derived from adding the inequalities \eqref{Re-GS-H1-u}, \eqref{Re-GS-H1-v}, \eqref{Re-GS-H1-p} and \eqref{Re-GS-H1-q},
  \begin{align}\label{Re-GS-local-1}
    & \tfrac{1}{2} \tfrac{\d}{\d t} ( \abs{u}_{H^1}^2 + \abs{v}_{H^1}^2 +  \abs{p}_{H^1}^2 +  \abs{q}_{H^1}^2) + \tfrac{d_u}{2}\abs{\nabla u }_{H^1}^2 + \tfrac{d_v}{2} | \nabla  v |_{H^1}^2 + d_p \abs{\nabla p }_{H^1}^2 + d_q \abs{\nabla q }_{H^1}^2 \no\\
    & \quad + k_1^- \abs{v^2}_{H^1}^2 + k_1^+ \abs{uv}_{L^2}^2 + k_1^+ | \p u \cdot v |_{L^2}^2 + k_0^+ \abs{u}_{H^1}^2 + k_2^+ \abs{v}_{H^1}^2 + k_2^- \abs{p}_{H^1}^2 + k_0^- \abs{q}_{H^1}^2  \no\\
    & \leq k_1^- \abs{v}_{H^1}^3 \abs{u}_{H^1} + k_1^+ \abs{u}_{H^1}\abs{v}_{H^1}^3 + \tfrac{4(k_1^+)^2}{d_u} \abs{u}_{H^1}^2\abs{v}_{H^1}^4  + \tfrac{9(k_1^-)^2}{d_u} \abs{v}_{H^1}^6 + \tfrac{(k_1^+)^2}{d_v} \abs{u}_{H^1}^2\abs{v}_{H^1}^4  \no\\
    & \quad  + \tfrac{4(k_1^+)^2}{d_v} \abs{u}_{H^1}^2 \abs{v}_{H^1}^4 + k_0^- \abs{q}_{H^1}\abs{u}_{H^1} + k_2^- \abs{v}_{H^1}\abs{p}_{H^1} + k_2^+ \abs{v}_{H^1}\abs{p}_{H^1} + k_0^+ \abs{q}_{H^1}\abs{u}_{H^1} \no\\
    & \leq C_L (E_L(t) + E_L^3(t)),
  \end{align}
where $C_L = k_0^- + k_0^+ + k_1^- +  k_1^+  + k_2^- + k_2^+ + \tfrac{4(k_1^+)^2}{d_u} + \tfrac{9(k_1^-)^2}{d_u} + \tfrac{(k_1^+)^2}{d_v} + \tfrac{4(k_1^+)^2}{d_v}$. Recalling the definitions of the energy functionals $E_L(t)$ and $D_L(t)$ in \eqref{Loc-Re-Energy-1}, we finish the proof of Lemma \ref{lemma local-Re-GS-1} from the inequality \eqref{Re-GS-local-1}.
\end{proof}

\subsection{Local well-posedness with large initial data }\label{Sub: Local-Result to the Re-GS-1 model}

The aim of this subsection is to prove the local well-posedness of the system \eqref{sys:Re-Gray-Scott} with large initial data, namely, prove the existence result of the Theorem \ref{thm:local-existence-convergence}. We first construct the linear approximate system by iteration. Then the key step is to prove the existence of the uniform positive time lower bound to the iterative approximate system and thereby the uniform energy bound will hold. Finally, by compactness arguments, we justify the local existence results.

\smallskip\noindent{\textbf{The iteration scheme}}:
We construct the approximate system of \eqref{sys:Re-Gray-Scott} by iteration as follows, for all integers $n \geq 0$, that
  \begin{align}\label{It-Re-Gray-Scott}
  \begin{cases}
    u_t^{n+1} = d_u \Delta u^{n+1} - k_1^+ u^{n+1} (v^n)^2 + k_1^- (v^n)^3 - k_0^+ u^{n+1} + k_0^- q^n \\
    v_t^{n+1} = d_v \Delta v^{n+1} + k_1^+ u^n v^n v^{n+1}  - k_1^- (v^n)^2 v^{n+1}- k_2^+ v^{n+1} + k_2^- p^n\\
    p_t^{n+1} = d_p \Delta p^{n+1} + k_2^+ v^n- k_2^- p^{n+1} \\
    q_t^{n+1} = d_q \Delta q^{n+1} + k_0^+ u^n - k_0^- q^{n+1}.
  \end{cases}
  \end{align}
And we start the approximating system from $n=0$ with
  \begin{align}\label{It-IC-Re-Gray-Scott-1}
    u^{in}(t,x)= u_0(x)  &, \quad v^{in}(t,x) = v_0(x), \no\\
     p^{in}(t,x)= p_0(x) &, \quad q^{in}(t,x) = q_0(x) .
  \end{align}

In the arguments proving the convergence ($n \rightarrow \infty$) of the approximate solutions \eqref{It-Re-Gray-Scott}, it is essential to obtain uniform (in $n \geq 0$) energy estimates of \eqref{It-Re-Gray-Scott} in a uniform lower bound lifespan time, whose derivations are the almost same as the derivations of the a priori estimates for the system \eqref{sys:Re-Gray-Scott}. 
The arguments of the uniform lower bound lifespan time can be referred to \cite{JL-SIAM-2019}, for instance. The convergence arguments are a standard process. For simplicity, we will only consider the a priori estimates in Lemma \ref{lemma local-Re-GS-1} for the smooth solutions of system \eqref{sys:Re-Gray-Scott} on some time interval.

\smallskip\noindent{\textbf{The maximum time of existence}}:
From Lemma \ref{lemma local-Re-GS-1}, we see that
  \begin{align}\label{Loc-1}
    \tfrac{\d}{\d t} E_{L} (t) + D_{L} (t) \leq C_L ( 1 + E_L^2 (t) ) E_{L} (t) \,,
  \end{align}
where the energy functionals $E_L (t)$ and $D_L (t)$ are defined in \eqref{Loc-Re-Energy-1}. Then \eqref{Loc-1} implies
  \begin{align}\label{Loc-2}
    \frac{d}{d t} \Big[ \ln \frac{E_L (t)}{(1 + E_L^2 (t) )^\frac{1}{2}} \Big] \leq C_L \,.
  \end{align}
Noticing that
  \begin{align*}
    E_L (0) = E_L^{ in} : = | u^{in} |^2_{H^1} + \abs{v^{in}}^2_{H^1} + \abs{p^{in}}^2_{H^1} + | q^{in} |_{H^1}^2 < \infty \,,
  \end{align*}
we derive from integrating the inequality \eqref{Loc-2} over $[0,t]$ that
  \begin{align}\label{Loc-3}
    \frac{E_L (t)}{ \big[ 1 + E_L^2 (t) \big]^\frac{1}{2} } \leq \frac{E_L^{in} }{ \big[ 1 + (E_L^{ in})^2 \big]^\frac{1}{2} } e^{C_L t} = A_1(t) \,.
  \end{align}
Consider the function
  $$H (y) = \frac{y}{(1 + y^2)^\frac{1}{2}} $$
for $y \geq 0$. It is easy to see that $H (0) = 0$, $\lim\limits_{y \rightarrow \infty} H (y) = 1$, and $H' (y)>0$ is strictly decreasing in $[0,\infty)$.  We therefore see that if $A_1(t) < 1$, the nonlinear inequality \eqref{Loc-3} on the functional $E_L (t)$ can be solved as
  \begin{align}\label{Loc-4}
    E_L (t) \leq H^{-1} (A_1(t)) = H^{-1} \left( \tfrac{E_L^{in} }{ \big[ 1 + (E_L^{in})^2 \big]^\frac{1}{2} } e^{C_L t} \right)  = B (t) \,.
  \end{align}
Notice that $B (t)$ is strictly increasing on $t \geq 0$. $A_1(t) < 1$ implies that
$$t < \tfrac{1}{C_L} \ln \tfrac{[1 + (E_L^{ in})^2]^\frac{1}{2}}{E_L^{ in}} \,. $$
Consequently, from \eqref{Loc-1} and \eqref{Loc-4}, we derive that for any $0 < T < \tfrac{1}{C_L} \ln \tfrac{[1 + (E_L^{ in})^2]^\frac{1}{2}}{E_L^{ in}} $,
  \begin{align*}
    E_L (t) + \int_0^t D_L (\tau) \d \tau \leq E_L^{ in} + C_LT B(T) ( 1 + B^2 (T) ) = A (T, E_L^{ in}, C_L)
  \end{align*}
holds for all $t \in [0,T]$. Consequently, we conclude the local existence of the system \eqref{sys:Re-Gray-Scott} with large initial data.

\subsection{Reversible-irreversible limit}\label{local-convergence-1}

We now in a position of proving the convergence result of Theorem \ref{thm:local-existence-convergence}, according to local energy bound of the existence result in Theorem \ref{thm:local-existence-convergence}, we aim at deriving the uniform estimates of system \ref{sys:Re-Gray-Scott-eps}. Based on the local energy bound \eqref{Re-GS-local-1} in Theorem \ref{thm:local-existence-convergence}, we now introduce the following energy functional $E^\eps_{L} (t)$ and energy dissipative rate functional $D^\eps_{L} (t)$ similarly:
  \begin{align*}
    E^\eps_{L} (t) & = \abs{u^\eps}^2_{H^1} + \abs{v^\eps}^2_{H^1} + \abs{p^\eps}^2_{H^1} + \abs{q^\eps}^2_{H^1} \,,  \no\\
    D^\eps_{L} (t) & = \tfrac{d_u}{2}\abs{\nabla u^\eps }^2_{H^1}  + \tfrac{d_v}{2}\abs{\nabla v^\eps }^2_{H^1} +  d_p \abs{\nabla p^\eps }^2_{H^1} +  d_q \abs{\nabla q^\eps }^2_{H^1} + k_1^- \abs{(v^\eps)^2}_{H^1}^2 + k_1^+ \abs{u^\eps v^\eps}_{L^2}^2 \no\\
     &\quad  + k_1^+ \abs{\p u^\eps \cdot v^\eps }_{L^2}^2 + k_0^+\abs{u^\eps}_{H^1}^2  + k_2^+\abs{v^\eps}_{H^1}^2 + k_2^-\abs{p^\eps}_{H^1}^2 + k_0^-\abs{q^\eps}_{H^1}^2 \,,
  \end{align*}
  and satisfy the following inequality:
  \begin{align}\label{limit-bound-1}
    \tfrac{1}{2} \tfrac{\d}{\d t} E^\eps_{L} (t) + D^\eps_{L} (t) \leq C_L^\eps(E^\eps_L(t) + (E^\eps_L(t))^3).
  \end{align}
We aim at deriving the system \eqref{sys:Ir-Gray-Scott} from the system \ref{sys:Re-Gray-Scott-eps} as $\eps \rightarrow 0$. Based on the definition of $C_L$, we know that
  \begin{align*}
    C_L^\eps \leq \overline{C}_L + 1
  \end{align*}
as $\eps \rightarrow 0$, where $\overline{C}_L = k_0^- + k_0^+ +  k_1^+ + k_2^+ + \tfrac{4(k_1^+)^2}{d_u} + \tfrac{(k_1^+)^2}{d_v} + \tfrac{4(k_1^+)^2}{d_v}$.  We obtain the uniform bound with respect to $\eps$ in the following sense, for any $0 < T < \tfrac{1}{\overline{C}_L + 1} \ln \tfrac{[1 + (E_L^{in})^2]^\frac{1}{2}}{E_L^{ in}}$, $0< \eps <1$,
  \begin{align}\label{uniform-bound-1}
    &\sup\limits_{t\in[0,T]}(\abs{u^\eps}_{H^1}^2 + \abs{v^\eps}_{H^1}^2 + \abs{p^\eps}_{H^1}^2 +  \abs{q^\eps}_{H^1}^2) \no\\
    &\quad + \int_{0}^{T} (\abs{\nabla u^\eps}_{H^1}^2 +  | \nabla v^\eps |_{H^1}^2 +  \abs{\nabla p^\eps }_{H^1}^2 + \abs{\nabla q^\eps }_{H^1}^2) \d t \leq C(T,E_L^{in},\overline{C}_L),
  \end{align}
where the constant $C$ is independent of $\eps$. Then we know that $(u^\eps, v^\eps, p^\eps, q^\eps )$ is uniformly bounded in $L^\infty (0,T; H^1) \cap L^2 (0,T; H^2)$.

In order to use the compactness Aubin-Lions-Simon's Theorem to prove the convergence from system \ref{sys:Re-Gray-Scott-eps} to system \eqref{sys:Ir-Gray-Scott}, we have to obtain the uniform estimates for the time derivative of $(u^\eps, v^\eps, p^\eps, q^\eps)$ in the following.

\smallskip\noindent{\textbf{Uniform estimates for time derivatives}}.
Firstly, according to the equation of $u^\eps$ in the system \ref{sys:Re-Gray-Scott-eps}, we obtain
\begin{align*}
  | u^\eps_t |_{L^2} \leq d_u \abs{\nabla u^\eps}_{H^1} + k_1^+ \abs{u^\eps}_{H^1}\abs{v^\eps}_{H^1}^2 + \eps | (v^\eps)|^3_{H^1} + k_0^+ \abs{u^\eps}_{L^2} + k_0^- \abs{q^\eps}_{L^2}.
\end{align*}

It follows that
  \begin{small}
  \begin{align}\label{bound-ut-L2-1}
    \int_0^T | u^\eps_t |_{L^2}^2 \d t 
    & \leq d_u^2 \int_0^T \abs{\nabla u^\eps}_{H^1}^2  \d t + (k_1^+)^2 T (\sup\limits_{ t\in[0,T]}\abs{u^\eps}_{H^1})^2(\sup\limits_{ t\in[0,T]}\abs{v^\eps}_{H^1})^4 + \eps^2 T (\sup\limits_{ t\in[0,T]}\abs{v^\eps}_{H^1})^6 \no\\
    & \quad + (k_0^+)^2 T (\sup\limits_{ t\in[0,T]}\abs{u^\eps}_{H^1})^2 + (k_0^-)^2 T (\sup\limits_{ t\in[0,T]}\abs{q^\eps}_{H^1})^2, \no\\
    & \leq C,
  \end{align}
  \end{small}
where in the last inequality we have used the uniform bound obtained in \eqref{uniform-bound-1}. Then we obtain $u^\eps_t$ is uniformly bounded in the space $L^2(0,T; L^2)$. Specifically speaking,
\begin{align*}
  | u^\eps_t |_{L^2(0,T; L^2)}\leq C(T) \,\,\text{for all}\,\, 0< \eps < 1
\end{align*}
is valid for $0 < T < \tfrac{1}{\overline{C}_L+1} \ln \tfrac{[1 + (E_L^{ in})^2]^\frac{1}{2}}{E_L^{ in}}$.

One notices that,
  \begin{align}\label{imbedding-1}
    H^2(\T^3) \hookrightarrow H^1(\T^3) \hookrightarrow L^2(\T^3)
  \end{align}
where the embedding of $H^2$ in $H^1$ is compact and the embedding of $H^1$ in $L^2$ is naturally continuous. Then from Aubin-Lions-Simon's Theorem, the bounds \eqref{uniform-bound-1}, \eqref{bound-ut-L2-1} and the embeddings \eqref{imbedding-1}, we deduce that there exists a $\u \in L^2(0,T; H^1)$ such that
  \begin{align*}
  u^\eps \rightarrow \u
  \end{align*}
strongly in $L^2(0,T; H^1)$ as $\eps \rightarrow 0$.

Moreover, note that
  \begin{align}\label{imbedding-2}
    H^1(\T^3) \hookrightarrow L^2(\T^3) \hookrightarrow L^2(\T^3)
  \end{align}
with the compact embedding of $H^1$ in $L^2$. Then from the Aubin-Lions-Simon's Theorem, the bound \eqref{uniform-bound-1}, \eqref{bound-ut-L2-1} and the embeddings \eqref{imbedding-2}, we deduce that the sequence $u^\eps$ strongly convergent to $\u$ in $C(0,T;L^2)$.

We obtain that the sequence $u^\eps_t$ is uniformly bounded in the space $L^2(0,T; L^2)$ and the strong convergence of the sequence $u^\eps$ in $C(0,T; L^2) \cap L^2(0,T; H^1)$. Similarly, we can obtain the same results of the sequence $v^\eps, p^\eps, q^\eps$, and we omitted it here.

Next we have to prove that the limit $(\u, \v, \pp, \q)$ is the solution of system \eqref{sys:Ir-Gray-Scott}.

\smallskip\noindent{\textbf{Convergence}}.
%
%
In the space $L^\infty(0,T; L^2)$, due to the Sobolev imbedding $H^1\hookrightarrow L^6$, we have the uniformly bounded for the $(v^\eps)^3$ and $p^\eps$ of the system \ref{sys:Re-Gray-Scott-eps}
  \begin{align}\label{v3-p-L2}
    \begin{cases}
    | (v^\eps)^3 |_{L^\infty(0,T; L^2)} \leq \abs{v^\eps}_{L^\infty(0,T; L^6)}^3 \leq \abs{v^\eps}_{L^\infty(0,T; H^1)}^3 \leq C,\\
    | p^\eps |_{L^\infty(0,T; L^2)} \leq \abs{p^\eps}_{L^\infty(0,T; H^1)} \leq C,
    \end{cases}
  \end{align}
where we use the uniform energy bound \eqref{uniform-bound-1}.

Then, in the space $L^2 (0,T; H^1)$, we obtain
  \begin{align}\label{v3-H1}
    |  (v^\eps)^3 |^2_{L^2 (0,T; H^1)} &=  \int_0^T | (v^\eps)^3 |^2_{H^1}  \d t \no\\
    & \leq  \int_{0}^{T} \abs{(v^\eps)^3}^2_{L^2}  \d t + 3 \int_{0}^{T} \abs{(v^\eps)^2\cdot \nabla v^\eps}_{L^2}^2  \d t \no\\
    & \leq T (\sup\limits_{t\in[0,T]}\abs{v^\eps}_{H^1})^6  + 3 (\sup\limits_{t\in[0,T]}\abs{v^\eps}_{H^1})^4 \int_{0}^{T} |\nabla v^\eps |_{H^1}^2 \d t \no\\
    & \leq C
  \end{align}
and
  \begin{align}\label{p-H1}
    | p^\eps |^2_{L^2 (0,T; H^1)} &\leq \int_0^T \abs{p^\eps}^2_{H^1}  \d t \leq  T(\sup\limits_{t\in[0,T]}\abs{p^\eps}_{H^1})^2 \leq C.
  \end{align}
Because of the  fact that the solution $(u^\eps, v^\eps, p^\eps, q^\eps)$ of system \eqref{sys:Re-Gray-Scott} is uniformly bounded in the space $L^\infty (0,T; H^1) \cap L^2 (0,T; H^2)$,  we know that $(v^\eps)^3$ and $p^\eps$ are bounded in $L^2 (0,T; H^1)$. Then it follows that, taking formally $\eps \rightarrow 0$,
  \begin{align*}
    \eps (v^\eps)^3 \rightarrow 0 \,\, \text{in}\,\, L^\infty (0,T; L^2) \cap L^2 (0,T; H^1),\\
    \eps p^\eps \rightarrow 0 \,\, \text{in}\,\, L^\infty (0,T; L^2) \cap L^2 (0,T; H^1).
  \end{align*}
Moreover, according to the strong convergence result and  Fatou's Lemma, we derive that
 \begin{align}\label{conver-uv2}
   & \abs{u^\eps (v^\eps)^2 - \u (\v)^2}_{L^2 (0,T; L^2)} \no\\
   & = \abs{(u^\eps - \u)(v^\eps)^2 + \u v^\eps (v^\eps - \v) + \u \v (v^\eps - \v) }_{L^2 (0,T; L^2)} \no\\
   & \leq \sup \abs{v^\eps }_{H^1}^4 \int_{0}^{T} \abs {u^\eps - \u}^2_{H^1} \d t + \sup \abs{\u }_{H^1}^2\sup \abs{v^\eps }_{H^1}^2 \int_{0}^{T} \abs {v^\eps - \v}^2_{H^1} \d t  \no\\
   &\quad + \sup \abs{\u}_{H^1}^2 \sup \abs{\v }_{H^1}^2 \int_{0}^{T} \abs {v^\eps - \v}_{H^1}^2 \d t  \no\\
   & \rightarrow 0, \quad \text{ as}\quad \eps \rightarrow 0.
 \end{align}
Hence, taking formally $\eps \rightarrow 0$ in system \ref{sys:Re-Gray-Scott-eps}, one can obtain the irreversible Gray-Scott system \eqref{sys:Ir-Gray-Scott}. We conclude $(\u, \v, \pp, \q)\in C(0,T; L^2) \cap L^2 (0,T; H^1)$ is indeed a solution of system \eqref{sys:Ir-Gray-Scott}. This completes the convergence result of Theorem \ref{thm:local-existence-convergence}.

\section{Global Well-posedness of the System \eqref{sys:Re-Gray-Scott} near Equilibrium}\label{Sec: global-wellposedness to the Re-GS-2 model}

In this section, we obtain the global in time existence under small size of initial data near the equilibrium state $(\ou, \ov, \op, \oq)$ of system \eqref{sys:Re-Gray-Scott}. We now introduce the following global energy functional $E_g(t)$ and global energy dissipative rate functional $D_g(t)$:
  \begin{align}\label{global-ED-2}
    E_g (t) & = k_0^+k_1^+k_2^+ \abs{ \widetilde{u} }_{H^1}^2 + k_0^+k_1^-k_2^+ \abs{ \widetilde{v} }_{H^1}^2 + k_0^+k_1^-k_2^- \abs{ \widetilde{p} }_{H^1}^2 + k_0^-k_1^+k_2^+\abs{ \widetilde{q} }_{H^1}^2, \no\\
    D_g (t) & = d_uk_0^+k_1^+k_2^+\abs{ \nabla \widetilde{u} }_{H^1}^2 + d_vk_0^+k_1^-k_2^+ \abs{ \nabla \widetilde{v} }_{H^1}^2 + d_p k_0^+k_1^-k_2^- | \nabla \widetilde{p} |_{H^1}^2 + d_qk_0^-k_1^+k_2^+ | \nabla \widetilde{q} |_{H^1}^2  \no\\
   &\quad + k_0^+ k_2^+ \ov^2 | k_1^+ \widetilde{u} - k_1^-  \widetilde{v} |_{H^1}^2 + k_1^+ k_2^+ | k_0^+ \widetilde{u} - k_0^- \widetilde{q} |_{H^1}^2 + k_0^+ k_1^- \ov^2 | k_2^+ \widetilde{v} - k_2^-  \widetilde{p} |_{H^1}^2.
  \end{align}

Now we state the main energy inequality in global time as follows.
  \begin{lemma}\label{lemma Re-GS-globle-2}
   Assume that $(\widetilde{u}(t,x), \widetilde{v}(t,x), \widetilde{p}(t,x), \widetilde{q}(t,x))$ is the solution to system \eqref{sys:perturb-2} on the interval $[0,T]$ constructed in the Theorem \ref{thm:local-existence-convergence}.  Then there are energy $E_g (u, v, p, q) (t)$ and  energy dissipative rate $D_g (u, v, p,q) (t)$ such that
  \begin{align}\label{energy-global-2}
    \tfrac{1}{2} \tfrac{\d}{\d t} E_g (t) + D_g (t) \leq C_g (1+ E_g^{\frac{1}{2}})E_g^{\frac{1}{2}}D_g
  \end{align}
for all $t \in [0,T]$, where $C_g = 4k_0^+k_1^+k_2^+ + 6 k_0^+k_1^+k_2^+\ov + 4k_0^+k_1^-k_2^+ + 6 k_0^+k_1^-k_2^+\ov$.
\end{lemma}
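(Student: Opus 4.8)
\noindent\textbf{Proof proposal for Lemma \ref{lemma Re-GS-globle-2}.}
The plan is to run a \emph{weighted} energy estimate on the perturbative system \eqref{sys:perturb-2}, at both the $L^2$ and the $H^1$ level, in which each equation is tested against a constant multiple of its own unknown. Concretely, I would multiply the $\widetilde u$-, $\widetilde v$-, $\widetilde p$- and $\widetilde q$-equations by the weights $k_0^+k_1^+k_2^+$, $k_0^+k_1^-k_2^+$, $k_0^+k_1^-k_2^-$ and $k_0^-k_1^+k_2^+$ respectively (exactly the weights appearing in $E_g$), pair in $L^2$ with $\widetilde u,\widetilde v,\widetilde p,\widetilde q$, and separately differentiate once and pair with $\p_x\widetilde u,\dots,\p_x\widetilde q$, integrate by parts over $\Omega=\R^3$, and sum. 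The diffusion terms then produce, after integration by parts, precisely the gradient part of $D_g(t)$, so the whole task reduces to showing that the remaining linear and nonlinear contributions either reconstruct the three perfect-square dissipations of $D_g(t)$ or are controlled by $(1+E_g^{1/2})E_g^{1/2}D_g$.

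The heart of the argument is that the weights are chosen so the linear terms collapse into perfect squares. Writing for brevity $\mathcal A=k_1^+\widetilde u-k_1^-\widetilde v$, $\mathcal B=k_0^+\widetilde u-k_0^-\widetilde q$ and $\mathcal C=k_2^+\widetilde v-k_2^-\widetilde p$, the linear parts of the four equations are exactly $-\ov^2\mathcal A-\mathcal B$, $\ov^2\mathcal A-\mathcal C$, $\mathcal C$ and $\mathcal B$. With the stated weights one has the identities $k_0^+k_1^+k_2^+\widetilde u-k_0^+k_1^-k_2^+\widetilde v=k_0^+k_2^+\mathcal A$, $k_0^+k_1^+k_2^+\widetilde u-k_0^-k_1^+k_2^+\widetilde q=k_1^+k_2^+\mathcal B$ and $k_0^+k_1^-k_2^+\widetilde v-k_0^+k_1^-k_2^-\widetilde p=k_0^+k_1^-\mathcal C$, so that collecting the weighted linear contributions yields, up to the fixed coefficients, exactly $-\abs{\mathcal A}_{L^2}^2$, $-\abs{\mathcal B}_{L^2}^2$ and $-\abs{\mathcal C}_{L^2}^2$ (and their $H^1$ analogues), i.e. minus the square dissipations in $D_g(t)$. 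The detailed-balance relations $k_1^+\ou=k_1^-\ov$, $k_0^+\ou=k_0^-\oq$, $k_2^+\ov=k_2^-\op$ used to derive \eqref{sys:perturb-2} are what make these groupings self-consistent.

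For the nonlinear terms I would exploit the same factorization rather than any crude product bound. The $\ov$-quadratic terms assemble into $\mp2\ov\,\widetilde v\,\mathcal A$ and the cubic terms into $\mp\widetilde v^{2}\mathcal A$ in the $\widetilde u$- and $\widetilde v$-equations. Testing against $\widetilde u,\widetilde v$ and using H\"older, the quadratic part is bounded by $C\ov\,\abs{\widetilde v}_{L^3}\abs{\widetilde u}_{L^6}\abs{\mathcal A}_{L^2}$; invoking $H^1(\R^3)\hookrightarrow L^3$ to get $\abs{\widetilde v}_{L^3}\ls E_g^{1/2}$, the homogeneous Sobolev inequality $\abs{\widetilde u}_{L^6}\ls\abs{\nabla\widetilde u}_{L^2}\ls D_g^{1/2}$ (valid without a lower-order term precisely because $\Omega=\R^3$), and $\abs{\mathcal A}_{L^2}\ls D_g^{1/2}$, this contribution is $\ls E_g^{1/2}D_g$. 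The cubic part is bounded by $C\abs{\widetilde v}_{L^6}^2\abs{\widetilde u}_{L^6}\abs{\mathcal A}_{L^2}\ls E_g\,D_g$ in the same fashion, with $\abs{\widetilde v}_{L^6}^2\ls E_g$. The $\widetilde p$- and $\widetilde q$-equations contribute only linear terms, already handled above. Summing the $L^2$ and $H^1$ estimates then gives $\tfrac12\tfrac{\d}{\d t}E_g+D_g\le C_g\,(E_g^{1/2}+E_g)\,D_g$, which is the claimed inequality.

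I expect the main obstacle to be the indefinite linear terms $-k_1^+\ov^2\widetilde u+k_1^-\ov^2\widetilde v$ together with the feeding/decay terms carrying $k_0^\pm,k_2^\pm$: these have no sign and cannot be dominated by the gradient dissipation, so the estimate succeeds only because the multiplicative weights and the detailed-balance identities route them into the nonnegative squares $\abs{\mathcal A}^2,\abs{\mathcal B}^2,\abs{\mathcal C}^2$. A closely related technical point is that the $\ov$-quadratic terms are likewise not absorbable by the gradient dissipation alone and must be channelled through $\abs{\mathcal A}_{L^2}^2$; this is exactly why the factorization $\widetilde v\,\mathcal A$ is indispensable, and why the $H^1$-level estimates must be organized to reproduce the same $\mathcal A,\mathcal B,\mathcal C$ structure after differentiation, with the extra derivative distributed via the Leibniz rule before applying the same H\"older--Sobolev bounds.
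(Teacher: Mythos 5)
Your proposal is correct and takes essentially the same route as the paper's proof: the same weights $k_0^+k_1^+k_2^+,\,k_0^+k_1^-k_2^+,\,k_0^+k_1^-k_2^-,\,k_0^-k_1^+k_2^+$, the same collapse of the indefinite linear terms into the perfect squares $\abs{k_1^+\widetilde u-k_1^-\widetilde v}^2$, $\abs{k_0^+\widetilde u-k_0^-\widetilde q}^2$, $\abs{k_2^+\widetilde v-k_2^-\widetilde p}^2$, the same factorization of the quadratic and cubic terms through $k_1^+\widetilde u-k_1^-\widetilde v$, and the same H\"older--Sobolev bounds at the $L^2$ and $H^1$ levels leading to $C_g(1+E_g^{1/2})E_g^{1/2}D_g$. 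Your explicit identities for the weighted differences (e.g. $k_0^+k_1^+k_2^+\widetilde u-k_0^+k_1^-k_2^+\widetilde v=k_0^+k_2^+(k_1^+\widetilde u-k_1^-\widetilde v)$) are exactly what the paper's summation step implements, so there is nothing to add.
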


We substitute $(u, v, p, q)$ for $(\widetilde{u}, \widetilde{v}, \widetilde{p}, \widetilde{q})$ in what follows, for simplicity.

\begin{proof}

We only need to modify the estimates in Lemma \ref{lemma local-Re-GS-1}. More precisely, it is displayed as follows.

\vspace*{2mm}

\smallskip\noindent\textbf{Step 1. $L^2$ estimates.}

We first consider the equality of $u$ in system \eqref{sys:perturb-2}, namely,
  \begin{align}\label{Re-GS-L2g-u-2}
     \tfrac{1}{2} \tfrac{\d}{\d t} \abs{ u }_{L^2}^2 + d_u| \nabla u |_{L^2}^2 & \leq \abs{ v }_{L^6}^2| k_1^+ u - k_1^- v|_{L^2} \abs{ u }_{L^6} + 2 \ov \abs{ u }_{L^6}\abs{ v }_{L^3}| k_1^+ u - k_1^- v |_{L^2} - k_1^+ \ov^2 \skp{u}{u}  \no\\
     & \quad + k_1^- \ov^2 \skp{v}{u} - k_0^+ \skp{u}{u} + k_0^- \skp{q}{u}.
  \end{align}
Then based on the second equation of system \eqref{sys:perturb-2}, we obtain
  \begin{align}\label{Re-GS-L2g-v-2}
     \tfrac{1}{2}\tfrac{\d}{\d t} \abs{ v }_{L^2}^2 + d_v \abs{ \nabla v }_{L^2}^2  & \leq  \abs{ v }_{L^6}^2| k_1^+ u - k_1^- v|_{L^2} \abs{ v }_{L^6} + 2 \ov \abs{ v }_{L^6}\abs{ v }_{L^3}| k_1^+ u - k_1^- v |_{L^2} + k_1^+ \ov^2 \skp{u}{v}  \no\\
    & \quad  - k_1^- \ov^2 \skp{v}{v} - k_2^+ \skp{v}{v} + k_2^- \skp{p}{v}.
  \end{align}
We next consider the equality of $p$, i.e.,
  \begin{align}\label{Re-GS-L2g-p-2}
    \tfrac{1}{2} \tfrac{\d}{\d t} \abs{ p }_{L^2}^2 + d_p \abs{ \nabla p }_{L^2}^2  \leq k_2^{+} \skp{p}{v} - k_2^{-} \skp{p}{p}.
  \end{align}
From the equality of $q$ and the H\"older inequality, we deduce that
  \begin{align}\label{Re-GS-L2g-q-2}
    \tfrac{1}{2} \tfrac{\d}{\d t} \abs{ q}_{L^2}^2 + d_q | \nabla q |_{L^2}^2  \leq k_0^{+} \skp{u}{q} - k_0^{-}\skp{q}{q}.
  \end{align}
Because the fact that
  \begin{align*}
    \skp{k_0^+ u}{k_0^+ u} _{L^2} - 2 \skp{k_1^+u}{k_0^- q}_{L^2} + \skp{k_0^- q}{k_0^- q} _{L^2} &= \skp{k_0^+ u - k_0^- q}{k_0^+ u - k_0^- q} _{L^2}\\
    & = \abs{ k_0^+ u - k_0^- q }_{L^2}^2,\\
    \skp{k_1^+ u}{k_1^+ u} _{L^2} - 2 \skp{k_1^+u}{k_1^- v}_{L^2} + \skp{k_1^- v}{k_1^- v} _{L^2} &= \skp{k_1^+ u - k_1^- v}{k_1^+ u - k_1^- v} _{L^2}\\
    & = \abs{ k_1^+ u - k_1^- v }_{L^2}^2,\\
    \skp{k_2^+ v}{k_2^+ v} _{L^2} - 2 \skp{k_2^+ v }{k_2^- p}_{L^2} + \skp{k_2^- p  }{k_2^- p}_{L^2} &= \skp{k_2^+ v - k_2^- p }{k_2^+ v - k_2^- p} _{L^2}\\
    & = \abs{ k_2^+ v - k_2^- p }_{L^2}^2.
  \end{align*}
  Derived from adding the $k_0^+k_1^+k_2^+$ times of inequalities \eqref{Re-GS-L2g-u-2}, the $k_0^+k_1^-k_2^+$ times of \eqref{Re-GS-L2g-v-2}, the $k_0^+k_1^-k_2^-$ times of \eqref{Re-GS-L2g-p-2} and the $k_0^-k_1^+k_2^+$ times of \eqref{Re-GS-L2g-q-2},  we thereby see that
  \begin{align}\label{Re-GS-L2g-2}
    & \tfrac{1}{2} \tfrac{\d}{\d t} ( k_0^+k_1^+k_2^+ \abs{ u }_{L^2}^2 + k_0^+k_1^-k_2^+ \abs{ v }_{L^2}^2 + k_0^+k_1^-k_2^- \abs{ p }_{L^2}^2 + k_0^-k_1^+k_2^+\abs{ q}_{L^2}^2) + d_uk_0^+k_1^+k_2^+| \nabla u |_{L^2}^2  \no\\
    & \quad + d_vk_0^+k_1^-k_2^+ \abs{ \nabla v }_{L^2}^2 + d_pk_0^+k_1^-k_2^- \abs{ \nabla p }_{L^2}^2  + d_qk_0^-k_1^+k_2^+ | \nabla q |_{L^2}^2  +  k_0^+k_2^+ \ov^2| k_1^+ u - k_1^- v |_{L^2}^2 \no\\
    & \quad + k_1^+k_2^+ | k_0^+ u - k_0^- q |_{L^2}^2 + k_0^+k_1^- | k_2^+ v - k_2^- p |_{L^2}^2  \no\\
    & \leq k_0^+k_1^+k_2^+ \abs{ v }_{H^1}^2 | \nabla u |_{L^2}| k_1^+ u - k_1^- v |_{L^2} + 2 k_0^+k_1^+k_2^+\ov \abs{ v }_{H^1}| \nabla u |_{L^2}| k_1^+ u - k_1^- v |_{L^2}  \no\\
    & \quad + k_0^+k_1^-k_2^+ \abs{ v }_{H^1}^2\abs{ \nabla v }_{L^2}| k_1^+ u - k_1^- v |_{L^2} + 2 k_0^+k_1^-k_2^+\ov \abs{ v }_{H^1}\abs{ \nabla v }_{L^2}| k_1^+ u - k_1^- v |_{L^2}.
  \end{align}

\vspace*{2mm}

\smallskip\noindent\textbf{Step 2. $H^1$ estimates.}

Before our estimate, we know that
  \begin{align*}
    & \p (- k_1^+ uv^2 + k_1^- v^3 -2 k_1^+ \ov uv + 2k_1^- \ov v^2)\\
    & = v^2 (-k_1^+ \p u + k_1^- \p v) + 2 v\cdot \p v (-k_1^+ u + k_1^- v) + 2\ov v (-k_1^+ \p u + k_1^- \p v) + 2\ov \p v (-k_1^+ u + k_1^- v).
  \end{align*}
We first combine the inequality \eqref{Re-GS-L2g-u-2} and take derivative to the first equation of system \eqref{sys:perturb-2}, we obtain
  \begin{align}\label{Re-GS-H1g-u-2}
    \tfrac{1}{2} \tfrac{\d}{\d t} | \p u |_{L^2}^2 + d_u | \p  \nabla u |_{L^2}^2  & \leq \abs{ v }_{L^6}^2 | k_1^+ \p u - k_1^- \p v |_{L^2}| \p u |_{L^6} + 2 \abs{ v }_{L^6} | \p v |_{L^2} | k_1^+ u - k_1^- v |_{L^6}| \p u |_{L^6}  \no\\
    & \quad + 2\ov \abs{ v }_{L^4} | k_1^+ \p u - k_1^- \p v |_{L^2}| \p u |_{L^4} + 2\ov | \p v |_{L^2} | k_1^+ u - k_1^- v |_{L^4}| \p u |_{L^4}  \no\\
    & \quad - k_1^+ \ov^2 \skp{\p u }{\p u} + k_1^- \ov^2 \skp{\p v }{\p u} - k_0^+ \skp{\p u }{\p u} + k_0^- \skp{\p q }{\p u}.
  \end{align}
Next we consider the $H^1$ estimate of $v$, based on the relation \eqref{Re-GS-L2g-v-2}, we see that
  \begin{align}\label{Re-GS-H1g-v-2}
    \tfrac{1}{2} \tfrac{\d}{\d t} | \p v |_{L^2}^2 + d_v | \p  \nabla v |_{L^2}^2 & \leq \abs{ v }_{L^6}^2 | k_1^+ \p u - k_1^- \p v |_{L^2}| \p v |_{L^6} + 2 \abs{ v }_{L^6} | \p v |_{L^2} | k_1^+ u - k_1^- v |_{L^6}| \p v |_{L^6}  \no\\
    & \quad + 2\ov \abs{ v }_{L^4} | k_1^+ \p u - k_1^- \p v |_{L^2}| \p v |_{L^4} + 2\ov | \p v |_{L^2} | k_1^+ u - k_1^- v |_{L^4}| \p v |_{L^4}  \no\\
    & \quad + k_1^+ \ov^2 \skp{\p u }{\p v} - k_1^- \ov^2 \skp{\p v }{\p v} - k_2^+ \skp{\p v }{\p v} + k_2^- \skp{\p p }{\p v}.
  \end{align}
For the equation of $p$, combine with the inequality \eqref{Re-GS-L2g-p-2}, namely,
  \begin{align}\label{Re-GS-H1g-p-2}
    \tfrac{1}{2} \tfrac{\d}{\d t} | \p p |_{L^2}^2 + d_p | \p \nabla p |_{L^2}^2 \leq k_2^+ \skp{\p v }{\p p} - k_2^- \skp{\p p }{\p p} .
  \end{align}
Finally, we consider the inequality \eqref{Re-GS-L2g-q-2},
  \begin{align}\label{Re-GS-H1g-q-2}
    \tfrac{1}{2} \tfrac{\d}{\d t} | \p q |_{L^2}^2 + d_q | \p \nabla q |_{L^2}^2 \leq k_0^+ \skp{\p u }{\p q} - k_0^- \skp{\p q }{\p q}.
  \end{align}

Now we will close the energy estimates,  it is consequently derived from summing the $k_0^+k_1^+k_2^+$ times of inequalities \eqref{Re-GS-H1g-u-2}, the $k_0^+k_1^-k_2^+$ times of inequalities \eqref{Re-GS-H1g-v-2}, the $k_0^+k_1^-k_2^-$ times of inequalities \eqref{Re-GS-H1g-p-2} and the $k_0^-k_1^+k_2^+$ times of inequalities \eqref{Re-GS-H1g-q-2}, we have
  \begin{align*}
    & \tfrac{1}{2} \tfrac{\d}{\d t} ( k_0^+k_1^+k_2^+ | \p u |_{L^2}^2 + k_0^+k_1^-k_2^+ | \p v |_{L^2}^2 + k_0^+k_1^-k_2^- | \p  p |_{L^2}^2 + k_0^-k_1^+k_2^+| \p q |_{L^2}^2) + d_uk_0^+k_1^+k_2^+| \p \nabla u |_{L^2}^2 \no\\
    & \quad + d_vk_0^+k_1^-k_2^+ | \p \nabla v |_{L^2}^2 + d_p k_0^+k_1^-k_2^- | \p \nabla p |_{L^2}^2 + d_qk_0^-k_1^+k_2^+ | \p \nabla q |_{L^2}^2 + k_0^+ k_2^+ \ov^2 | k_1^+ \p u - k_1^- \p v |_{L^2}^2 \no\\
    & \quad + k_1^+ k_2^+ | k_0^+ \p u - k_0^- \p q |_{L^2}^2 + k_0^+ k_1^- | k_2^+ \p v - k_2^- \p p |_{L^2}^2  \no\\
    & \leq 3k_0^+k_1^+k_2^+ \abs{ v }_{H^1}^2 | \nabla u |_{H^1}| k_1^+ u - k_1^- v |_{H^1} + 4 k_0^+k_1^+k_2^+\ov \abs{ v }_{H^1}| \nabla u |_{H^1}| k_1^+ u - k_1^- v |_{H^1}  \no\\
    & \quad + 3k_0^+k_1^-k_2^+ \abs{ v }_{H^1}^2\abs{ \nabla v }_{H^1}| k_1^+ u - k_1^- v |_{H^1} + 4 k_0^+k_1^-k_2^+\ov \abs{ v }_{H^1}\abs{ \nabla v }_{H^1}| k_1^+ u - k_1^- v |_{H^1}.
  \end{align*}

\bigskip\noindent\textbf{Step 3. Close the energy inequality.}

Combine with the $L^2$ estimate and the $H^1$ estimate, we obtain
\begin{align}\label{Re-Gs-H1g-2}
  &\tfrac{1}{2} \tfrac{\d}{\d t} ( k_0^+k_1^+k_2^+ \abs{ u }_{H^1}^2 + k_0^+k_1^-k_2^+ \abs{ v }_{H^1}^2 + k_0^+k_1^-k_2^- \abs{ p }_{H^1}^2 + k_0^-k_1^+k_2^+\abs{ q}_{H^1}^2)  \no\\
   &\quad + d_uk_0^+k_1^+k_2^+\abs{ \nabla u }_{H^1}^2 + d_vk_0^+k_1^-k_2^+ \abs{ \nabla v }_{H^1}^2 + d_p k_0^+k_1^-k_2^- | \nabla p |_{H^1}^2 + d_qk_0^-k_1^+k_2^+ | \nabla q |_{H^1}^2  \no\\
   &\quad + k_0^+ k_2^+ \ov^2 | k_1^+ u - k_1^-  v |_{H^1}^2 + k_1^+ k_2^+ | k_0^+ u - k_0^- q |_{H^1}^2 + k_0^+ k_1^- | k_2^+ v - k_2^-  p |_{H^1}^2  \no\\
   & \leq 4k_0^+k_1^+k_2^+ \abs{ v }_{H^1}^2 | \nabla u |_{H^1}| k_1^+ u - k_1^- v |_{H^1} + 6 k_0^+k_1^+k_2^+\ov \abs{ v }_{H^1}| \nabla u |_{H^1}| k_1^+ u - k_1^- v |_{H^1} \no\\
   &\quad + 4k_0^+k_1^-k_2^+ \abs{ v }_{H^1}^2\abs{ \nabla v }_{H^1}| k_1^+ u - k_1^- v |_{H^1} + 6 k_0^+k_1^-k_2^+\ov \abs{ v }_{H^1}\abs{ \nabla v }_{H^1}| k_1^+ u - k_1^- v |_{H^1}  \no\\
   & \leq C_g (1+ E_g^{\frac{1}{2}})E_g^{\frac{1}{2}}D_g,
\end{align}
where $C_g = 4k_0^+k_1^+k_2^+ + 6 k_0^+k_1^+k_2^+\ov + 4k_0^+k_1^-k_2^+ + 6 k_0^+k_1^-k_2^+\ov $. Consequently, the inequality \eqref{Re-Gs-H1g-2} concludes Lemma \ref{lemma Re-GS-globle-2}.
\end{proof}

\textbf{Proof of Theorem \ref{thm:global-exist-2}.} Based on Lemma \ref{lemma Re-GS-globle-2} and the local-in-time existence to the system \eqref{sys:Re-Gray-Scott}, we can prove the global-in-time existence to the system \eqref{sys:Re-Gray-Scott} with small size initial data near the equilibrium $(\ou, \ov, \op, \oq)$.

Based on Lemma \ref{lemma Re-GS-globle-2}, we now start to prove the global-in-time existence to the system \eqref{sys:Re-Gray-Scott} with small size initial data. We directly deduce
  \begin{align}\label{Global-Final}
    \tfrac{1}{2} \tfrac{\d}{\d t} E_g (u, v, p, q ) (t) +  D_g (u, v, p, q) (t) \leq C_g (1+ E_g^{\frac{1}{2}})E_g^{\frac{1}{2}}D_g
  \end{align}
where
  \begin{align}\label{Cg}
    C_g = 4k_0^+k_1^+k_2^+ + 6 k_0^+k_1^+k_2^+\ov + 4k_0^+k_1^-k_2^+ + 6 k_0^+k_1^-k_2^+\ov.
  \end{align}
We observe that
  \begin{align*}
    E_g(0) = k_0^+k_1^+k_2^+ \abs{ u_0 }_{H^1}^2 + k_0^+k_1^-k_2^+ \abs{ v_0 }_{H^1}^2 + k_0^+k_1^-k_2^- \abs{ p_0 }_{H^1}^2 + k_0^-k_1^+k_2^+\abs{ q_0 }_{H^1}^2.
  \end{align*}
We now take $\nu =\min \{ 1, \tfrac{1}{64 C_g^2} \} \in (0,1]$ such that if $E_g(0) \leq \nu$, then
  \begin{align}\label{small initial bound}
    C_g (1+ E_g^{\frac{1}{2}}(0))E_g^{\frac{1}{2}}(0)\leq \tfrac{1}{4}.
  \end{align}
Now we define
  \begin{align}\label{T}
    T=\sup\{\tau\geq0;\sup\limits_{t\in[0,\tau]}C_g (1+ E_g^{\frac{1}{2}}(t))E_g^{\frac{1}{2}}(t) \leq \tfrac{1}{2}\}\geq 0.
  \end{align}
By the continuity of $E_g(t)$ and the small initial bound \eqref{small initial bound}, that we have $T>0$. We further claim that $T=+\infty$, Indeed, if $T<+\infty$, then the energy inequality in Lemma \ref{lemma Re-GS-globle-2} implies that for all $t\in[0,T]$,
  \begin{align*}
    \tfrac{\d}{\d t}E_g(t)+D_g(t)\leq 0,
  \end{align*}
which means
  \begin{align*}
    \sup\limits_{t\in[0,T]}E_g(t)+\int_0^TD_g(t) \d t\leq E_g(0).
  \end{align*}
Then the above bound reduces to
  \begin{align*}
    \sup\limits_{t\in[0,T]}C_g (1+ E_g^{\frac{1}{2}}(t))E_g^{\frac{1}{2}}(t)\leq C_g (1+ E_g^{\frac{1}{2}}(0))E_g^{\frac{1}{2}}(0)\leq \tfrac{1}{4} < \tfrac{1}{2}.
  \end{align*}
By the continuity of $E_g(t)$, there is a $t^*>0$ such that for all $t\in[0,T+t^*]$,
  \begin{align*}
    C_g (1+ E_g^{\frac{1}{2}}(t))E_g^{\frac{1}{2}}(t)< \tfrac{1}{2},
  \end{align*}
which contradict to the definition of $T$ in \eqref{T}. Thus $T = +\infty$, consequently, we have
  \begin{align*}
    \sup\limits_{t\geq 0}E_g(t)+\int_0^{\infty}D_g(t) \d t\leq E_g(0),
  \end{align*}
which finish proof of the first part of Theorem \ref{thm:global-exist-2}.


\section{Conclusion}

Gray-Scott model is an important reaction-diffusion system, especially in the study of Turing pattern and related issues such as stability/instability, bifurcation and phase transitions. In this paper, we derived by the EnVarA a new reversible Gray-Scott type model. This reversible model possesses a natural entropy structure, and is thus thermodynamically consistent. In physics, our work links non-equilibrium thermodynamics theory on chemical reaction away from equilibrium. From a mathematical viewpoint, this indicates a new possible way to study those chemical reaction-diffusion process from perspectives of modeling, analysis and simulations.


Notice that the spatial domain we work with is the torus or the whole space, in order to avoid more discussions on boundary. Meanwhile a chemical reaction in reality usually occurs in a bounded domain. As mentioned before, it could be the first natural problem for us to derive the corresponding system with proper boundary conditions, by the EnVarA. It is addressed not only for modeling studying, but also for rigorous analysis, say, to consider the global weak solutions for the initial boundary value problems (IBVP).

The second aspect concerns the long time behavior of the obtained solutions, in classical or weak sense and in a bounded or unbounded domain. This is important in studying the stability issues of steady states. Many researches discussed the trend to equilibrium and the convergence rate, see \cite{Ni-11b,BCD-07bull,DFT-17sima,MHM-15jdde} for instance.

The next aspect is with the asymptotic relationship between our reversible Gray-Scott-like model \eqref{sys:Re-Gray-Scott} and the classical Gray-Scott model \eqref{eq:ori-GS}. Recall that on one hand, we have obtained the convergence from reversible system \eqref{sys:Re-Gray-Scott} towards the irreversible system \eqref{sys:Ir-Gray-Scott} (in Theorem \ref{thm:local-existence-convergence}), and on the other hand, we also have provided formally a asymptotic consistency between the irreversible model \eqref{sys:Ir-Gray-Scott} and the classical Gray-Scott model \eqref{eq:ori-GS} (on page 4 in \S \ref{sub:reviews}). Combining the two process together, we actually have addressed a two-step convergence scheme from the reversible system \eqref{sys:Re-Gray-Scott} towards the classical irreversible system \eqref{eq:ori-GS}. Rigorous justification for the limit will involve a slow-fast dynamics perspective and some singular limits. This work is under preparation.

However, we note that as pointed out in \cite{Chu-71ces,GY-11ces}, not all irreversible reactions can be regarded as a limiting case of reversible reactions. This requires more discussions with law of mass action and detailed balance. 

The last but not the least important issue is with the numerical simulation viewpoint, which is very useful in studying the patterns and stability/instability problems, and some coupling effects with other different mechanics such as temperature and electric fields \cite{Mie11,LS-21axv,WLLE-20pre,LWW-21jcp,LWZ-21nnfm}. This could in turn raise more research topics in mathematical analysis.


\section*{Acknowledgments}

This work is partially supported by the National Science Foundation (USA) grants NSF DMS-1950868, the United States-Israel Binational Science Foundation (BSF) \#2024246 (C. Liu, Y. Wang), and the grants from the National Natural Science Foundation of China No. 11971360 and No. 11731008 (N. Jiang), and No. 11871203 (T.-F. Zhang). This work was initiated when T.-F. Zhang visited the Department of Applied Mathematics at Illinois Institute of Technology, he would like to acknowledge the hospitality of IIT and the sponsorship of the China Scholarship Council, under the State Scholarship Fund No. 201906415023.




\end{document}